\documentclass[10pt]{article}
\usepackage{amssymb,amscd}
\usepackage{amsmath, amsthm}
\usepackage{graphicx}
\usepackage{psfrag}
\usepackage{comment}
\usepackage{amsrefs}

\begin{document}

\newtheorem{lemma}{Lemma}[section]
\newtheorem{corollary}{Corollary}[section]
\newtheorem{theorem}{Theorem}[section]
\newtheorem{statement}{Statement}[section]
\newtheorem{prop}{Proposition}[section]
\newtheorem{definition}{Definition}[section]
\newtheorem{note}{Note}[section]
\newtheorem{conjecture}{Conjecture}[section]

\newcommand {\C}{{\mathbb C}}
\newcommand {\D}{{\mathbb D}}
\newcommand{\De}{{\Delta}}
\newcommand{\de}{{\delta}}
\newcommand {\ra} {\rightarrow}
\newcommand{\la}{{\lambda}}
\newcommand{\La}{{\Lambda}}
\newcommand{\si}{{\sigma}}
\newcommand{\Si}{{\Sigma}}
\newcommand{\Om}{{\Omega}}
\newcommand{\om}{{\omega}}

\newcommand{\tl}{\tilde}
\newcommand{\sm}{\setminus}
\newcommand{\inter}{\operatorname{int}}
\newcommand{\di}{\partial}

\newcommand{\AAA}{{\cal{A}}}
\newcommand{\FF}{{\cal {F}}}
\newcommand{\MM}{{\cal{M}}}
\newcommand{\TT}{{\cal{T}}}
\newcommand{\QC}{{\cal {QC}}}
\newcommand{\SSS}{{\cal{S}}}

\newcommand{\N}{{\Bbb{N}}}

\newcommand{\Ups}{{\Upsilon}}

\renewcommand{\Im}{\operatorname{Im}}
\newcommand{\diam}{\operatorname{diam}}

\newcommand{\comm}[1]{}

\title{$\lambda$-Lemma for families of Riemann surfaces \\
  and the critical loci of complex  H\'enon maps}
\author{Tanya Firsova and Mikhail Lyubich}
\maketitle

\begin{abstract} We prove a version of the classical $\lambda$-lemma for
 holomorphic families of Riemann surfaces.
We then use it  to show that critical loci for complex H\'{e}non
maps that are small perturbations of quadratic polynomials with
Cantor Julia sets are all quasiconformally equivalent.
\end{abstract}

%\begin{abstract} We consider a family of disks with $n$ holes in $\mathbb C^2$.
%We assume that the boundary moves holomorphically in $\mathbb C^2$ and moreover that the holomorphic motion
%is smooth. We show that there exists an extension of the holomorphic motion to the interior.
%We use the extension to show that critical loci for complex H\'{e}non
%maps that are small perturbations of quadratic polynomials with
%disconnected Julia set are quasiconformally equivalent.
%\end{abstract}

\section{Introduction}

A {\it holomorphic motion} in dimension one  is a family of
injections $h_\lambda:  A \rightarrow \hat{\mathbb C}$ of some set
$A\subset \hat{\mathbb C}$ holomorphically depending on a
parameter $\lambda$ (ranging over some complex manifold
$\Lambda$). It turned out to be  one of the most useful tools in
one-dimensional complex dynamics. First it was used to prove that
a generic rational endomorphism $f: \bar{\mathbb C}\rightarrow
\bar{\mathbb C}$ is structurally stable (see \cite{MSS,L}), and
then has found numerous further applications.

%The notion of holomorphic motions was introduced my Ma\~{n}\'{e}, Sad and Sullivan \cite{MSS} in their seminal
%paper ``On the dynamics of rational maps''. Many important dynamical objects for complex
%dynamical systems move holomorphically. In particular, the Julia set of a rational function moves holomorphically inside a hyperbolic component.
%And hence, they are quasiconformally equivalent to each other by
%Ma\~{n}\'{e}, Sad and Sullivan's Lambda Lemma.

Usefulness of holomorphic motions largely comes from their nice
extension and regularity properties usually referred to as the
{\it $\lambda$-lemma}. The simplest version of the Extension
$\lambda$-lemma asserts that the holomorphic motion of any subset
$X\subset \hat{\mathbb C}$ extends to a holomorphic motion of the
closure $\bar X$ \cite{MSS,L}. A more advanced version says that
it  extends to the whole Riemann sphere  over a smaller parameter
domain \cite{BR,ST}. The strongest version asserts that if
$\Lambda$ is the disk $\mathbb D\subset \mathbb C$ then the
extension is globally defined, over the whole $\mathbb D$.
Moreover, the maps $h_\lambda$  are automatically continuous
\cite{MSS,L} and in fact, quasiconformal \cite{MSS}.

% Dynamically significant objects in $\mathbb C^2$ often move holomorphically.
In dimension two, holomorphic motions $h_\la : A \ra \C^2$,
$A\subset \C^2$, do not have such nice properties: in general,
they do not admit extension even to the closure $\bar A$, and the
maps $h_\la$ are not automatically continuous (let alone,
quasiconformal).
%The dependence on a point in $\mathbb C^2$ does not need to be quasiconformal or continuous,
%they do not need to extend uniquely to the closure of the set.
%It is interesting to see whether dynamical holomorphic motions, have good properties, analogous to one dimensional case.
Still, under some circumstances, holomorphic motions turn out to be  useful   in higher dimensions as well,
% This question was in a particular case studied in
see \cite{BB,DL}.

In this paper, we prove a version of  the $\la$-lemma for
a class of  holomorphic motions in $\mathbb C^2$ that naturally arise in the study of complex H\'{e}non maps.
Namely, we consider a holomorphic
family of Riemann surfaces $S_{\lambda}\subset \mathbb C^2$  that fit into a complex two-dimensional manifold
such that the boundaries of $S_{\lambda}$ move holomorphically in $\mathbb C^2$.
We show that under suitable conditions, the holomorphic motion of the boundary can be extended to a holomorphic motion of the
surfaces. The proof  is based upon Teichm\"{u}ller Theory.

This work is motivated by study of the geometry of the critical locus $\mathcal C$ for the H\'enon
automorphisms
$$
    f: (x,y) \mapsto (x^2+c-ay, x)
$$
of ${\mathbb C}^2$. This locus was introduced by Hubbard (see \cite{BS5}) as the set of tangencies between two dynamically defined foliations outside the ``big'' Julia set.
It was studied in \cite{LR,F} in the case of small perturbations (i.e., with a small Jacobian $a$)
of one-dimensional hyperbolic polynomials
$P_c: x\mapsto x^2+c$. In case when $c$ is outside the Mandelbrot set (and $a$ is small enough), the critical locus has a rich topology
described in \cite{F}. Our version of the $\la$-lemma implies that all these critical loci are
quasiconformally equivalent.

\section{Background}
\subsection{Notations}
We will use the following notations throughout the paper:
$\Delta$ for the unit disk, $\mathbb H$ for the hyperbolic plane, $\hat{\mathbb C}$ for the Riemann sphere.

\subsection{$\la$-lemma}
Let $M$ be a complex manifold, and let $\Delta\subset \mathbb C$ be a unit disk.

\begin{definition} Let $A\subset M$. A holomorphic motion of  $A$ over $\De$ is a map $f:\Delta\times A \to M$ such that:
\begin{enumerate}
\item For any $a\in A$, the map $\lambda\mapsto f(\lambda,a)$ is holomorphic in $\Delta$;
\item For any $\lambda\in \Delta$, the map $a\mapsto f(\lambda,a)=:f_{\lambda}(a)$ is an injection;
\item The map $f_0$ is the identity on $A$.
\end{enumerate}
\end{definition}

Holomorphic motions in one-dimensional dynamical context first appeared in \cite{L,MSS}.
The following simple but important virtues of one-dimensional  holomorphic motions
are usually referred to as $\lambda$-lemma:

\newtheorem*{Extension lemma}{Extension $\la$-lemma}
\begin{Extension lemma}[\cite{L,MSS}]
Let $M=\hat \C$,  $A\subset \hat \C$.
 Any holomorphic motion $f:\Delta\times A \to \hat\C$
     extends to a holomorphic motion  $\De\times \bar A \ra \hat \C$.
\end{Extension lemma}

\begin{definition} Let $(X,d_X),$ $(Y,d_Y)$ be two metric spaces. A homeomorphism $f:X\to Y$ is said to be $\eta$-quasisymmetric, if there exists an increasing continuous function $\eta:[0,\infty)\to [0,\infty)$, such that for any triple of distinct points $x,$ $y$ and $z:$
$$
\frac{d_Y(f(x), f(y))}{d_Y(f(x), f(z))}\leq \eta\left(\frac{d_X(x,y)}{d_X(x,z)}\right)
$$
\end{definition}

A quasisymmetric map between two open domains is quasiconformal.

\newtheorem*{Qc lemma}{Qc $\la$-lemma}
\begin{Qc lemma}[\cite{MSS}]
Under the circumstances of the Extension $\la$-lemma,
for any $\lambda\in \De,$ the map $f_{\lambda}: \bar A \ra \bar A$ is quasisymmetric.
\end{Qc lemma}

Later,  Bers \& Royden \cite{BR} and
Sullivan \& Thurston \cite{ST} proved
that there exists a universal $\delta>0$
such that under the circumstances of the Extension $\la$-lemma,
the restriction of $f$ to the parameter disk $\D_\de$ of radius $\delta$ can be extended to
a holomorphis motion  $\De_\de \times \hat {{\mathbb C}} \ra \hat \C$ (``BRST $\la$-lemma'').
Though this version of the $\lambda$-lemma will be sufficient for our dynamical applications,
let us also state the strongest version  asserting
that $\delta$ is actually equal to $1$:
%Finally, Slodkowski \cite{Slodkowski} proved that $\delta$ is actually equal to $1$:

\newtheorem*{Slod lemma}{Slodkowski's $\la$-lemma}
\begin{Slod lemma}
Let  $A\subset \hat \C$.
Any holomorphic motion $f:\Delta\times A \to \hat\C$
extends to a holomorphic motion  $\De\times \hat{\mathbb C} \ra \hat\C$.
\end{Slod lemma}

In what follows, we will use the same notation $f$ for the extended holomorphic motion.

\subsection{Elements of Teichm\"uller Theory}
We assume that the reader is familiar with the basics of Teichm\"{u}ller Theory.
To set up terminology and notation, we recall some basic definitions and statements and refer to \cite{Teich} for details.

Given a base Riemann surface $S$,
let $\QC(S)$ stand for the set of all Riemann surfaces quasiconformally equivalent to $S$.

\begin{definition} Let $X_1, X_2\in \QC(S) $, and let  $\phi_i :S\to X_i$ be quasiconformal mappings.
The pairs $(X_1,\phi_1)$ and $(X_2,\phi_2)$ are called {\it Teichm\"{u}ller equivalent}
if there exists a conformal isomorphism $\alpha:X_1\to X_2$ such that
% $\phi_2=\alpha\circ\phi_1$ on the ideal boundary $I(S)$ and
$\phi_2$ is homotopic to $\alpha\circ\phi_1$ relative to the ideal boundary $I(S)$.
The class of equivalent pairs is called a marked by $S$ Riemann surface.%
\footnote{Somewhat informally, we will use notation $(X, \phi)$, or just $X$, for the equivalence class.}
\end{definition}

\begin{definition} The Teichm\"{u}ller space ${\cal T}(S)$ modeled on $S$
is the space of marked by $S$ Riemann surfaces.
\end{definition}

The space $\TT(S)$  can be endowed with a natural  {\it Teichm\"{u}ller metric}.

Any marked Riemann surface  $(\tilde{S}, \psi ) \in {\cal T}(S)$
defines an isometry
\begin{equation}\label{change}
\psi_*: {\cal T}(S) \ra  {\cal T}(\tilde{S}), \quad
  \psi_*:(X, \phi)\to (X, \phi \circ \psi^{-1}),
\end{equation}
 called a {\it change of the base point} of the Teichm\"{u}ller space.

\begin{definition} {\it A Beltrami form} $\mu$ on $S$ is a measurable $(-1,1)$-differential form
with $|\mu(z)|< 1$ a.e. It is called bounded if $\| \mu\|_\infty < 1$.
\end{definition}

Locally, $\mu$ can be represented as
$\displaystyle{\mu(z)\frac{d\bar{z}}{dz}}$, where $\mu(z)$ is a measurable function with  $|\mu(z) |<1$ a.e.
(Notice that the latter condition is independent of the choice of the local coordinate.)

Any Beltrami form $\mu$ determines a {\it conformal structure} on $S$,
i.e., the class of metrics conformally equivalent to $ dz + \mu(z)\,  d\bar z$.
(In what follows, Beltrami forms and the corresponding conformal structures will be freely identified.)
The standard structure $\si$ corresponds to $\mu\equiv 0$.

Let $\MM(S)$ be the space of bounded Beltrami forms on $S$.
It is identified with the unit ball in the complex Banach space $L^\infty(S)$,
from which it inherits a natural complex structure.

Any quasiconformal map $f: S\ra X$ induces the pullback
\begin{equation}\label{eq:pull_back}
 f^*: \MM(X) \ra \MM(S).
\end{equation}

\newtheorem*{MRMT}{Measurable Riemann Mapping Theorem}
\begin{MRMT} Let $\mu$ be a bounded Beltrami form on $S$ with
$\|\mu\|_\infty = k <1$. Then there exists a Riemann surface
$S_\mu \in \QC(S)$ and  a $K$-quasiconformal map $f_\mu : S\to
S_\mu  $ with $K=(1+k)/(1-k)$ such  that $f_\mu^*\sigma=\mu$.
Moreover, it is unique up to postcomposition with some conformal
map $h: S_\mu \to S_\mu' $.
\end{MRMT}

Analytically, $f= f_\mu$ gives a solution to the {\it Beltrami equation}
\begin{equation}\label{Beltrami eq}
 \frac{\partial f }{\partial \bar{z}}= \mu \frac{\partial f}{\partial z}.
\end{equation}

By the Measurable Riemann Mapping Theorem, there is a natural projection
$$\Phi_S: \MM(S)\to {\cal T}(S).$$

The pullback operator from equation (\ref{eq:pull_back}) descends
to $f^*:\TT(X)\to \TT(S)$. It is the inverse of the change of the
base point $f_*$.

\begin{theorem}
There exists a unique complex structure on ${\cal T}(S)$ such that the projection  $\Phi_S$
is holomorphic.
\end{theorem}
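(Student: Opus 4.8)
The plan is to construct the complex structure on $\TT(S)$ by transporting it from $\MM(S)$ through the projection $\Phi_S$, and then to verify uniqueness by a local argument using the Bers embedding. First I would recall that $\MM(S)$ is the unit ball of the Banach space $L^\infty(S)$, hence carries a natural complex Banach-manifold structure, and that $\Phi_S$ is surjective with a natural right inverse $s:\TT(S)\to\MM(S)$ over a neighborhood of any point (obtained, say, by taking harmonic Beltrami forms, or more concretely via the Bers embedding into the space of bounded quadratic differentials). The key point is that the fibers of $\Phi_S$ over a point $[\mu]$ are precisely the equivalence classes of Beltrami forms $\nu$ for which $f_\nu$ and $f_\mu$ are Teichm\"uller-equivalent; to push forward the complex structure one must know that $\Phi_S$ admits local holomorphic sections and that the transition maps between the charts they define are biholomorphic.

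The main steps, in order, are: (1) Fix a base point $\tau=[\mu]\in\TT(S)$; using the change-of-base-point isometry (\ref{change}) we may assume $\tau$ is the class of $\sigma$, i.e. work at the origin of $\MM(S)$. (2) Produce a holomorphic local section $s_\tau$ of $\Phi_S$ near $\tau$. The standard construction uses the Ahlfors--Weill / Bers section: near the origin, a point of $\TT(S)$ is represented by a univalent function with small Schwarzian $\varphi$ (a bounded holomorphic quadratic differential on the mirror surface), and $\varphi\mapsto$ the Ahlfors--Weill reflection gives a Beltrami form $\mu_\varphi$ depending holomorphically on $\varphi$; this gives a chart $U_\tau\to B\subset Q(S^*)$, a ball in a complex Banach space. (3) Declare these charts to be holomorphic and check compatibility: given two base points $\tau_1,\tau_2$ with overlapping charts, the transition map is a composition of a change of base point (which is an isometry and, one checks, biholomorphic in the Banach structures) with the analytic-dependence statement that $\mu\mapsto\Phi_S(\mu)$ read in Bers coordinates is holomorphic — this is exactly the holomorphic dependence of the solution of the Beltrami equation on parameters. (4) With this atlas, $\Phi_S$ is holomorphic because in Bers coordinates it is the map $\mu\mapsto$ Schwarzian of the normalized solution of (\ref{Beltrami eq}), which depends holomorphically on $\mu\in L^\infty(S)$. (5) Uniqueness: if two complex structures both make $\Phi_S$ holomorphic, then since $\Phi_S$ has local holomorphic sections in each, the identity map of $\TT(S)$ is locally biholomorphic in both directions, so the two structures coincide.

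The step I expect to be the main obstacle is (3)–(4): proving that the solution of the Beltrami equation, suitably normalized and read through the Bers embedding, depends \emph{holomorphically} (not merely real-analytically or smoothly) on the Beltrami coefficient $\mu\in L^\infty(S)$. This is the analytic heart of the matter — it requires the Ahlfors--Bers theorem on holomorphic dependence of $f_\mu$ on $\mu$, together with the fact that the Schwarzian derivative and the restriction to the complement are holomorphic operations on the relevant function spaces. Once holomorphic dependence on parameters is in hand, surjectivity of $\Phi_S$ and the existence of the local sections make the construction and its uniqueness essentially formal. I would also need to check that the complex structure so defined is independent of the choice of section used in each chart, which again reduces to holomorphic dependence on $\mu$ plus the observation that two sections differ by a map into a single $\Phi_S$-fiber composed with a biholomorphism.
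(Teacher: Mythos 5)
This theorem is part of the paper's background section: the authors explicitly state that they only ``recall some basic definitions and statements and refer to \cite{Teich} for details,'' and they give no proof of this result. So there is no in-paper argument to compare yours against; what can be assessed is whether your outline matches the standard proof in the literature, and it does. Steps (1)--(4) are exactly the classical construction: reduce to the base point via the change-of-base-point isometry, build local holomorphic sections through the Bers embedding and the Ahlfors--Weill extension, and verify that everything in sight is holomorphic using the Ahlfors--Bers theorem on holomorphic dependence of the normalized solution of the Beltrami equation on $\mu\in L^\infty$. You correctly identify that theorem as the analytic heart of the matter.

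The one soft spot is your uniqueness step (5). You assert that $\Phi_S$ ``has local holomorphic sections in each'' of the two competing structures, but the construction only produces holomorphic sections for the structure you built; for an arbitrary second structure making $\Phi_S$ holomorphic, the existence of local holomorphic sections is not automatic and is exactly what would be needed to run your two-sided argument. With a section for the constructed structure $A$ you do get that $\mathrm{id}\colon \TT_A\to\TT_B$ is holomorphic (it factors as $\Phi_S^B\circ s^A$), but the reverse direction needs either a $B$-holomorphic section or an inverse-function-type argument, neither of which you supply. This is why the precise statement in \cite{Teich} characterizes the structure as the unique one for which $\Phi_S$ is holomorphic \emph{and admits local holomorphic sections}; as literally stated in the paper (holomorphy of $\Phi_S$ alone), the uniqueness clause requires this extra care. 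Since the result is quoted background rather than something the paper proves, this does not affect anything downstream, but if you intend your sketch as a proof you should either add the section hypothesis to the uniqueness claim or justify the reverse direction.
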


Notice that the change of the base point (\ref{change}) is a biholomorphism $\TT(S)\ra \TT(\tl S)$,
so the complex structure on the Teichm\"uller space  is independent of the choice of $S$.

\begin{prop}[Slodkowski's $\lambda$-lemma restated \cite{Teich}]
Every holomorphic map $\gamma:\Delta\to {\cal T}(S)$
lifts to a holomorphic map $\tilde{\gamma}:\Delta\to {\cal \MM}(S)$.
% such that $\Phi_X\circ {\tilde \gamma}=\gamma$.
\end{prop}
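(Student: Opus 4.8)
The plan is to realize the statement as an instance of Slodkowski's $\lambda$-lemma via the standard dictionary between holomorphic motions and maps into Teichm\"uller space. Recall that $\TT(S)$ here is the Teichm\"uller space of the Riemann sphere with a distinguished set, so that a holomorphic motion $f : \Delta \times A \to \hat{\C}$ of a set $A\subset\hat\C$ corresponds to a holomorphic map $\gamma_f : \Delta \to \TT(\hat\C \sm A)$ (or the appropriate universal Teichm\"uller-type space when $A$ is not discrete), and conversely; this is the ``restated'' form already invoked in the excerpt. Under this dictionary, a holomorphic \emph{lift} $\tilde\gamma : \Delta \to \MM(S)$ of $\gamma$ is exactly the data of a holomorphically varying family of Beltrami forms $\mu_\lambda$ whose solutions $f^{\mu_\lambda}$ assemble into a holomorphic motion of the \emph{whole} sphere extending $f$. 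So the two formulations are equivalent, and it suffices to produce the lift from the extension.

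First I would take an arbitrary holomorphic $\gamma : \Delta \to \TT(S)$ and unwind what it encodes: it is (equivalent to) a holomorphic motion $f : \Delta \times A \to \hat\C$ of some set $A\subset\hat\C$, with $\gamma(0)$ the base point. Then I invoke Slodkowski's $\lambda$-lemma as stated above: $f$ extends to a holomorphic motion $F : \Delta \times \hat\C \to \hat\C$ of the entire sphere. For each fixed $\lambda$, the map $F_\lambda : \hat\C \to \hat\C$ is quasiconformal (by the Qc $\lambda$-lemma applied to the extended motion), so it has a Beltrami coefficient $\mu_\lambda := F_\lambda^*\sigma \in \MM(\hat\C)$; set $\tilde\gamma(\lambda) := \mu_\lambda$. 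Since $F_0 = \mathrm{id}$ we get $\tilde\gamma(0) = 0$, and by construction $\Phi_S \circ \tilde\gamma = \gamma$ because $F_\lambda$ realizes the marked surface $\gamma(\lambda)$ (its restriction to $A$ is $f_\lambda$, which determines the point of $\TT(S)$). It remains to check that $\lambda \mapsto \mu_\lambda$ is holomorphic as a map into the Banach ball $\MM(\hat\C) \subset L^\infty$.

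The main obstacle is precisely this holomorphy of $\lambda \mapsto \mu_\lambda$ in the $L^\infty$ sense, which does \emph{not} follow formally from the pointwise holomorphy of $\lambda \mapsto F_\lambda(z)$ — one needs that the complex-dilatation construction is compatible with the Banach-space structure. The standard route is: $\lambda \mapsto F_\lambda$ is holomorphic as a map $\Delta \to W^{1,p}_{\mathrm{loc}}$ for some $p>2$ (this is the quantitative regularity built into the proof of the measurable Riemann mapping theorem with parameters, sometimes called holomorphic dependence of $f^\mu$ on parameters), whence $\lambda \mapsto \partial_{\bar z} F_\lambda$ and $\lambda \mapsto \partial_z F_\lambda$ are holomorphic into $L^p_{\mathrm{loc}}$; and since $\partial_z F_\lambda$ is locally bounded away from $0$, the quotient $\mu_\lambda = \partial_{\bar z}F_\lambda / \partial_z F_\lambda$ is holomorphic into $L^\infty$. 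Rather than re-prove this, I would cite it from the Teichm\"uller-theory reference \cite{Teich}: the holomorphic dependence of solutions of the Beltrami equation on parameters is exactly what makes $\Phi_S$ holomorphic (the theorem quoted just above the statement), and the assertion of the proposition is the ``section'' counterpart of that fact. Thus the proof reduces to: \emph{(i)} translate $\gamma$ into a holomorphic motion of a subset of $\hat\C$; \emph{(ii)} apply Slodkowski to extend it to all of $\hat\C$; \emph{(iii)} read off the Beltrami coefficients of the extended motion and invoke parametric holomorphy of the Beltrami solution to conclude $\tilde\gamma$ is the desired holomorphic lift.
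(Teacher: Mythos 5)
The paper offers no proof of this proposition --- it is quoted verbatim from \cite{Teich} --- so your sketch has to be measured against the argument in that reference. Your overall strategy (deduce the lifting property from Slodkowski's theorem on holomorphic motions, then check $L^\infty$-holomorphy of the Beltrami coefficients) is indeed the standard route, and your discussion of why $\lambda\mapsto\mu_\lambda$ is holomorphic into the unit ball of $L^\infty$ correctly flags a real issue with a workable fix (pointwise holomorphy plus a uniform bound, or parametric regularity of the Beltrami equation).

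The genuine gap is in step \emph{(i)} and it propagates to step \emph{(iii)}. The identification of $\TT(S)$ with ``the Teichm\"uller space of the sphere with a distinguished set'' is only valid when $S=\hat\C\sm A$; here $S$ is a general hyperbolic surface --- in this paper a bordered surface of finite type, uniformized as $\mathbb H/\Gamma$ with $\Gamma$ a Fuchsian group of the second kind, so that $\TT(S)$ is infinite-dimensional because of the ideal boundary. A point of $\TT(S)$ is \emph{not} encoded by the positions of a subset of $\hat\C$, so there is no direct ``dictionary'' turning $\gamma$ into a holomorphic motion; one must pass to the uniformizing group, e.g.\ via the Bers embedding, which produces a holomorphic motion of a $\Gamma$-invariant set \emph{together with} a holomorphic family of isomorphisms $\rho_\lambda:\Gamma\to\Gamma_\lambda$ onto quasi-Fuchsian groups. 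At that point a plain application of Slodkowski's $\lambda$-lemma is not enough: the extension $F_\lambda$ it provides need not satisfy $F_\lambda\circ g=\rho_\lambda(g)\circ F_\lambda$ for $g\in\Gamma$, so the Beltrami coefficient $\mu_\lambda=F_\lambda^*\sigma$ need not be $\Gamma$-invariant and therefore does not descend to a Beltrami form on $S=\mathbb H/\Gamma$ at all --- your $\tilde\gamma(\lambda)$ would land in $\MM(\hat\C)$, not in $\MM(S)$. The missing ingredient is the \emph{equivariant} version of Slodkowski's theorem (due to Earle--Kra--Krushkal, and this is the form proved in \cite{Teich}), which asserts that a holomorphic motion compatible with a group of M\"obius transformations admits an equivariant extension. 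With that in hand your steps \emph{(ii)}--\emph{(iii)} go through; without it the construction does not produce a lift to $\MM(S)$.
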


\medskip
Let $S$ be a hyperbolic  Riemann surface,
and let $p:\mathbb H\to S$
be its universal covering with the group of deck transformations $\Gamma$.

\begin{lemma}[\cite{Teich}] Let $\nu$ be an infinitesimal Beltrami form on $S$, then $\nu\in \mbox{Ker}\ d \Phi_{S}$
if and only if $p^*\nu=\bar{\partial} \eta$, where $\eta$ is a continuous $\Gamma$-invariant vector field on
$\bar{\mathbb H}$ such that the distributional derivative $\bar{\partial} \eta$ has bounded $L^{\infty}$-norm and $\eta=0$ on
$\overline{\mathbb R}$.
\end{lemma}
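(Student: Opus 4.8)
The plan is to prove both implications after lifting to the universal cover and recalling that $\operatorname{Ker} d\Phi_S$ is the space of infinitesimally trivial Beltrami forms. Write $\tl\nu:=p^*\nu$, a bounded $\Gamma$-invariant Beltrami form on $\mathbb H$, extended to $\hat\C$ by reflection in $\overline{\mathbb R}$ (the symmetry $\mu(\bar z)=\overline{\mu(z)}$ forces the solutions below to preserve $\mathbb H$). For small $t$ let $w^{t\tl\nu}\colon\hat\C\ra\hat\C$ denote the solution of the Beltrami equation with coefficient $t\tl\nu$ normalized to fix $0,1,\infty$ (Measurable Riemann Mapping Theorem). By the Ahlfors--Bers theory $t\mapsto w^{t\tl\nu}$ is holomorphic and
\[
\dot w(z):=\frac{d}{dt}\Big|_{t=0} w^{t\tl\nu}(z) = -\frac1\pi\iint_{\mathbb C}\tl\nu(\zeta)\,\frac{z(z-1)}{\zeta(\zeta-1)(\zeta-z)}\,dm(\zeta),
\]
with $dm$ planar Lebesgue measure; this $\dot w$ is a continuous vector field on $\hat\C$ vanishing at $0,1,\infty$, with $\bar\partial\dot w=\tl\nu$ in the sense of distributions. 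I shall also use that $\Phi_S$ is a holomorphic submersion (standard Teichm\"uller theory: it admits holomorphic local sections, e.g.\ the Ahlfors--Weill section, a fact also reflected in the lifting property of Slodkowski's lemma above), so the fibre of $\Phi_S$ over the base point of $\TT(S)$ is a complex submanifold whose tangent space at $0$ is $\operatorname{Ker} d\Phi_S$; consequently $\nu\in\operatorname{Ker} d\Phi_S$ if and only if there is a holomorphic curve $\mu_t$ in $\MM(S)$ with $\mu_0=0$, $\dot\mu_0=\nu$ and $\Phi_S(\mu_t)$ identically equal to the base point.

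For the direction $(\Rightarrow)$ I would fix such a curve $\mu_t$ and set $\tl\mu_t:=p^*\mu_t$. Unwinding the definition of $\TT(S)$, the triviality $\Phi_S(\mu_t)=$ base point means exactly that one may choose a solution $w_t\colon\hat\C\ra\hat\C$ of the Beltrami equation with coefficient $\tl\mu_t$ whose boundary map satisfies $w_t|_{\overline{\mathbb R}}=\mathrm{id}$ (the standard description of the base point of $\TT(\Gamma)$). Such a $w_t$ is unique --- two of them would differ by a M\"obius transformation fixing $\overline{\mathbb R}$ pointwise --- hence $w_t$ commutes with every $\gamma\in\Gamma$, because $w_t\circ\gamma\circ w_t^{-1}$ is again M\"obius (the coefficient $\tl\mu_t$ being $\Gamma$-invariant) and agrees with $\gamma$ on $\overline{\mathbb R}$. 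Writing $w_t=B_t\circ w^{\tl\mu_t}$ with $B_t\in\mathrm{PSL}(2,\mathbb R)$ the M\"obius map carrying the three points $w^{\tl\mu_t}(r_i)$, $r_i\in\overline{\mathbb R}$, back to $r_i$, one sees that $B_t$, hence $w_t$, depends holomorphically on $t$, so $\eta:=\frac{d}{dt}\big|_{0} w_t$ exists and is continuous on $\bar{\mathbb H}$. Differentiating the relations $\bar\partial w_t=\tl\mu_t\,\partial w_t$, $w_t|_{\overline{\mathbb R}}=\mathrm{id}$, and $w_t\circ\gamma=\gamma\circ w_t$ at $t=0$ yields, respectively, $\bar\partial\eta=\tl\nu\in L^\infty$, $\eta|_{\overline{\mathbb R}}=0$, and $\eta\circ\gamma=\gamma'\cdot\eta$, i.e.\ the $\Gamma$-invariance of the vector field $\eta$. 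Thus $p^*\nu=\bar\partial\eta$ with all the asserted properties.

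For the direction $(\Leftarrow)$ I would run this backwards. Suppose $p^*\nu=\bar\partial\eta$ with $\eta$ continuous and $\Gamma$-invariant on $\bar{\mathbb H}$, vanishing on $\overline{\mathbb R}$, and $\bar\partial\eta\in L^\infty$. By Zygmund's theorem the potential of an $L^\infty$ form has modulus of continuity $O\!\big(r\log\tfrac1r\big)$, so $\eta$ is log-Lipschitz and generates a flow $\phi_t$ of homeomorphisms of $\bar{\mathbb H}$ with $\phi_0=\mathrm{id}$ and $\frac{d}{dt}\big|_{0}\phi_t=\eta$; for $|t|$ small the $\phi_t$ are quasiconformal, with Beltrami coefficient $t\tl\nu+o(t)$, they commute with $\Gamma$ (flow of a $\Gamma$-invariant field), and they fix $\overline{\mathbb R}$ pointwise (since $\eta|_{\overline{\mathbb R}}=0$). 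Hence $\phi_t$ descends to a quasiconformal map $S\ra S_{\mu_t}$, where $\mu_t\in\MM(S)$ is the descent of the Beltrami coefficient of $\phi_t$, and this map is isotopic to the identity relative to $I(S)$ through the descents of $\phi_{st}$, $s\in[0,1]$; therefore $\Phi_S(\mu_t)$ is the base point of $\TT(S)$ for every small $t$. Since $\dot\mu_0=\nu$, the curves $\mu_t$ and $t\nu$ are tangent at $0$, so $d\Phi_S(\nu)=\frac{d}{dt}\big|_{0}\Phi_S(\mu_t)=0$, i.e.\ $\nu\in\operatorname{Ker} d\Phi_S$.

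The part I expect to require genuine work is the analytic machinery behind the two constructions rather than their formal scheme. On one side this is the Ahlfors--Bers regularity theory: holomorphic dependence of $w^{t\tl\nu}$ on $t$, continuity of the normalized potential up to $\overline{\mathbb R}$, and the fact that $\bar\partial$ of such a potential of an $L^\infty$ form is again in $L^\infty$; on the other side it is the statement that a log-Lipschitz vector field integrates to a quasiconformal isotopy. Both rest on singular-integral estimates for the Beurling--Ahlfors transform. A second delicate point, used in both directions, is the identification of the base point of $\TT(S)$ with the existence of a Beltrami solution having identity boundary values on all of $\overline{\mathbb R}$: one must check that a $\Gamma$-equivariant boundary map that is trivial on the ideal boundary $I(S)$ is already the identity on the whole circle, which holds because it fixes the fixed points of the hyperbolic elements of $\Gamma$ and these are dense in the limit set. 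Finally, the soft input from the first paragraph --- that $\operatorname{Ker} d\Phi_S$ is tangent to the trivial fibre --- relies on the holomorphic sectioning of $\Phi_S$.
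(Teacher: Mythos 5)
First, a remark on the comparison itself: the paper does not prove this Lemma --- it is quoted from \cite{Teich} as background, and only the Corollary that follows it is proved in the text --- so there is no in-paper argument to measure yours against. Judged on its own terms, your forward direction is the standard Ahlfors--Bers argument and is sound modulo the quoted facts (holomorphic dependence of the normalized solutions $w^{t\tilde\nu}$, the identification of the trivial class with $w|_{\overline{\mathbb R}}=\mathrm{id}$, and the split-submersion property of $\Phi_S$ giving the reduction of $\operatorname{Ker} d\Phi_S$ to tangent vectors of curves in the trivial fibre); you correctly flag the one delicate point about boundary values on the limit set versus the discontinuity set.

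The converse direction, however, has a genuine gap at the flow construction. The hypothesis controls only $\bar\partial\eta$, not the full differential of $\eta$. Writing $\eta=P(\bar\partial\eta)+h$ locally, with $P$ a Cauchy transform, the potential $P(\bar\partial\eta)$ is indeed log-Lipschitz, but $h$ is merely a holomorphic function on $\mathbb H$ continuous up to $\overline{\mathbb R}$, and such functions need not satisfy any Osgood-type modulus of continuity up to the boundary (a Riemann map onto a Jordan domain with bad boundary is continuous but not H\"older); so the asserted log-Lipschitz regularity of $\eta$, and with it the existence and uniqueness of the flow $\phi_t$, does not follow. More seriously, even when the flow exists, quasiconformality of $\phi_t$ and the expansion $\mu(\phi_t)=t\tilde\nu+o(t)$ in $L^\infty$ --- which is exactly what you need in order to apply the chain rule at the end --- require a bound on $\partial\eta$ as well, since heuristically $\mu(\phi_t)\approx t\,\bar\partial\eta/(1+t\,\partial\eta)$; with $\partial\eta$ uncontrolled, $\|\mu(\phi_t)\|_\infty$ need not even be less than $1$ for small $t\neq0$. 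The standard ways to close this direction avoid flows altogether: either establish the duality $\operatorname{Ker} d\Phi_S=\{\nu:\ \iint_S\nu q=0$ for every integrable holomorphic quadratic differential $q\}$ and then observe that $\iint_S(\bar\partial\xi)q=\lim\oint\xi q=0$ by Stokes because the field vanishes on the boundary; or extend $\eta$ to $\hat\C$ by reflection and show via Liouville's theorem that the normalized infinitesimal solution $\dot w[\tilde\nu]$ coincides with $\eta$, hence vanishes on $\overline{\mathbb R}$, and then use the characterization of infinitesimally trivial forms by the vanishing of $\dot w$ on $\overline{\mathbb R}$. Either route replaces your third paragraph's analytic claims with ones that actually follow from the stated hypotheses on $\eta$.
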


\begin{corollary}\label{cor:Ker} Assume that $S$ is a bounded type Riemann surface with the boundary
$\partial S=\gamma^1\cup \dots\gamma^n$, where $\gamma^i$ are
smooth Jordan curves. Let $\nu$ be an infinitesimal Beltrami form.
Then $\nu\in \mbox{Ker}\ d\Phi_S$ if and only if
$\nu=\bar{\partial} \xi$, where $\xi$ a continuous vector field on
$S$ such that the distributional derivative $\bar{\partial} \xi$
has bounded $L^{\infty}$ norm and $\xi=0$ on $\partial S$.
\end{corollary}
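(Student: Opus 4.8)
The plan is to deduce the corollary from the preceding lemma by transferring its conclusion, which lives on the universal cover $\mathbb H$, down to the surface $S$ itself. Passing to the interior $\inter S$, whose ideal boundary is exactly $\di S=\gamma^1\cup\dots\cup\gamma^n$, we have $\TT(\inter S)=\TT(S)$ and $\MM(\inter S)=\MM(S)$, hence $\operatorname{Ker}d\Phi_S=\operatorname{Ker}d\Phi_{\inter S}$, so the lemma applies with $p\colon\mathbb H\ra\inter S$ the universal covering and $\Gamma$ its deck group. Since $\di S$ is a finite union of smooth Jordan curves and there are no punctures, $\Gamma$ is a finitely generated purely hyperbolic Fuchsian group of the second kind: its limit set $\Lambda\subset\di\mathbb H$ is nowhere dense (finite, or a Cantor set) and its convex core $\mathcal C\subset\inter S$ is compact. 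The set $\di\mathbb H\sm\Lambda$ is a $\Gamma$-invariant union of open arcs, each lying over one of the $\gamma^i$; and since the curves $\gamma^i$ are smooth, $p$ extends $C^1$-smoothly across each such arc to a diffeomorphism onto a one-sided neighbourhood of $\di S$ (boundary regularity of conformal maps onto domains with smooth boundary).

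For the ``only if'' direction, given $\nu\in\operatorname{Ker}d\Phi_S$ let $\eta$ be the vector field on $\bar{\mathbb H}$ supplied by the lemma. Being continuous and $\Gamma$-invariant on $\mathbb H$, it descends to a continuous vector field $\xi$ on $\inter S$ with $\bar\di\xi=\nu$, because $p^*\bar\di\xi=\bar\di p^*\xi=\bar\di\eta=p^*\nu$. Near any point of $\gamma^i$, pushing $\eta$ forward by the $C^1$ extension of $p$ and using that $\eta$ vanishes on the arc lying over $\gamma^i$, we see that $\xi$ extends continuously to $\gamma^i$ with $\xi|_{\gamma^i}=0$; carrying this out along each (compact) $\gamma^i$ shows that $\xi$ extends to a continuous vector field on $S$ vanishing on $\di S$. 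As $\xi$ is continuous up to $\di S$, its distributional $\bar\di$ acquires no singular part there, so $\bar\di\xi=\nu\in L^\infty$, as claimed.

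For the ``if'' direction, given $\nu=\bar\di\xi$ with $\xi$ continuous on $S$, $\xi|_{\di S}=0$ and $\bar\di\xi\in L^\infty$, set $\eta=p^*\xi$. Then $\eta$ is $\Gamma$-invariant, continuous on $\mathbb H$, and $\bar\di\eta=p^*\nu\in L^\infty(\mathbb H)$ with $\|p^*\nu\|_\infty=\|\nu\|_\infty$; by the lemma it suffices to check that $\eta$ extends continuously by $0$ to $\di\mathbb H$. On the arcs of $\di\mathbb H\sm\Lambda$ this is the computation of the previous paragraph read backwards, using $\xi|_{\di S}=0$. At a limit point $\zeta\in\Lambda$ the key point — and the main obstacle — is that $p$ is an isometry for the hyperbolic metrics, so $\|\eta(w)\|_{\mathrm{hyp}}=\|\xi(p(w))\|_{\mathrm{hyp}}$; for $w$ near $\Lambda$ the image $p(w)$ stays in a fixed compact neighbourhood of the convex core $\mathcal C$, on which $\xi$ has bounded hyperbolic norm, whereas the hyperbolic norm on $\mathbb H$ exceeds the Euclidean one by the factor $\lambda_{\mathbb H}(w)\ra\infty$ as $w\ra\di\mathbb H$; hence $\eta(w)\ra 0$, so the extension by $0$ is continuous and $\nu\in\operatorname{Ker}d\Phi_S$. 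The one delicate spot is a limit point that is the endpoint of an arc of discontinuity, where the two regimes meet and the $C^1$-boundary estimate must be matched against the isometry estimate; this is routine but not entirely formal. Alternatively, the ``if'' direction follows at once from the description of $\operatorname{Ker}d\Phi_S$ as the annihilator of the integrable holomorphic quadratic differentials $q$ on $S$, which are bounded near the smooth $\di S$: integration by parts then gives $\iint_{\inter S}\nu q=\iint_{\inter S}\bar\di(\xi q)=\oint_{\di S}\xi q=0$ because $\xi$ vanishes on $\di S$.
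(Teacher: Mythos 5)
Your overall strategy is the paper's: reduce to the preceding lemma by passing back and forth through the universal cover $p:\mathbb H\to S$ and comparing hyperbolic and Euclidean norms of the (lifted or descended) vector field. The paper only writes out the ``if'' direction, the one it actually uses later: it lifts $\xi$ to $\mathbb H$, asserts that the lift is bounded in the hyperbolic metric on a fundamental domain because $\xi$ vanishes on $\di S$, extends the bound to all of $\mathbb H$ by M\"obius invariance, and concludes Euclidean vanishing on $\overline{\mathbb R}$. Your ``only if'' direction (descend the $\eta$ supplied by the lemma and use boundary regularity of $p$ along the arcs of discontinuity) is correct and is a genuine addition to what the paper writes down.

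In the ``if'' direction, however, one step is false: it is not true that points $w$ Euclidean-close to $\Lambda$ have $p(w)$ in a fixed compact neighbourhood of the convex core. The $\Gamma$-orbit of any point of $\mathbb H$ --- in particular of a lift of a point arbitrarily close to $\di S$ --- accumulates on the \emph{whole} limit set, and for a group of the second kind every non-isolated limit point is approximated by intervals of discontinuity, whose associated half-planes (projecting to the collars of the $\gamma^i$) come arbitrarily Euclidean-close to $\zeta$. So your two regimes do not separate: near every point of $\Lambda$, not just endpoints of arcs, you must control $\|\xi(p(w))\|_{\mathrm{hyp}}\cdot\Im w$ with $p(w)$ close to $\di S$, and mere continuity plus vanishing of $\xi$ on $\di S$ does not bound the hyperbolic norm of $\xi$ there (e.g.\ $\xi\sim\sqrt{d(\cdot,\di S)}$ near the boundary is continuous and vanishing but hyperbolically unbounded). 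This is precisely the point that the paper's own three-line proof also glosses over, so your write-up is at the same level of rigor on the hard step while being wrong in the specific mechanism offered to handle it. Your alternative argument via quadratic differentials is the standard route, but its justification is also off: integrable holomorphic quadratic differentials need not be bounded near a smooth boundary (on the disk, $q=(1-z)^{-3/2}\,dz^2$ is integrable and unbounded), so the integration by parts requires an exhaustion/approximation argument rather than boundary boundedness of $q$.
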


\begin{proof} Let $p^{-1}(\xi)$ be a lift of the vector field $\xi$ to $
\mathbb H$. Let $D$ be a fundamental domain of the group $\Gamma$.
The vector field $\xi$ vanishes on the boundary. Therefore, $p^{-1}(\xi)|_D$ is bounded in the hyperbolic metric.
Since M\"{o}bius transformations preserve the the hyperbolic metric, $p^{-1}(\xi)$ is bounded in hyperbolic metric on $\mathbb H$.
Thus, it vanishes on the boundary in the Euclidean metric.
\end{proof}

The group $\Gamma$ is Fuchsian, so it acts on the whole Riemann
sphere $\hat{\mathbb C} $. Let $\MM^{\Gamma}(\hat {\mathbb C}
)\subset \MM(\hat {\mathbb C} )$  be the space  of
$\Gamma$-invariant Beltrami forms on $\hat{\mathbb C}$. We can map
$\MM(S)$ to $\MM^{\Gamma}(\mathbb C)$ by lifting $\mu\in \MM(S)$
to the Beltrami form $\hat{\mu}= p^*\mu$ on $\mathbb H$ and then
extending it by  $0$ to the rest of $\hat {\mathbb C}$. By the
Measurable Riemann Mapping Theorem, there exists a unique solution
$f_{\hat{\mu}}: \hat \C\ra \hat \C$ of Beltrami equation
(\ref{Beltrami eq}) for $\hat \mu$,
% $$
% \frac{\partial g^{\hat{\mu}}}{\partial \bar{z}}=\hat{\mu}\frac{\partial g^{\hat{\mu}}}{\partial z}
%$$
fixing $0,1$ and $\infty$.
%normalized so  that $g^{\hat{\mu}}(0)=0,$ $g^{\hat{\mu}}(1)=1, $  and $g^{\hat{\mu}}(\infty)=\infty $.
It conjugates the Fuchsian group $\Gamma$ to a quasi-Fuchsian group $\Gamma_\mu$ preserving the quasidisk
$f_\mu(\mathbb H)$. Hence it induces a quasiconformal map $S\ra S_\mu$ (for which we will keep the same notation
$f_\mu$).

Consider the map
$$
   \Psi : \MM (S)\times \hat{\mathbb C} \to \MM (S)\times \hat{\mathbb C},\quad
   (\mu,z) \mapsto (\mu, f_{\hat{\mu}}(z)).
$$
The image $\Psi(\MM(S)\times \mathbb H)$ is an open subset of $ \MM(S)\times \hat{\mathbb C} $
called the {\it Bers fiber space}.  Fiberwise actions of quasi-Fuchsian groups $\Gamma_\mu$
induce an action of $\Gamma$ on the Bers fiber space.

\begin{definition}
The quotient $\Psi(\MM(S)\times \mathbb H) /\Gamma$
is called  the Universal Curve over $\MM(S)$.
\end{definition}

\section{$\lambda$-Lemma for families of Riemann surfaces}
%We consider $\mathbb C^2$ with coordinates $z=(z_1,z_2)$, % and we  let $z=(z_1,z_2)$.

Let us consider a complex 3-fold $\De\times \C^2$,
and let $\pi_1: \De \times \C^2 \ra \De$ be the natural projection to $\De$.
Let $\bar \SSS\subset \De \times \C^2$ be a complex 2-fold with boundary such that
$\pi_1: \bar\SSS \ra \De$ is a smooth locally trivial fibration with fibers
$\bar S_\la$. We assume that the fibers  $\bar S_\la$ are compact Riemann surfaces with boundary
$\di S_\la= \gamma_\la^1\cup\dots \cup \gamma_\la^n$, where the $\gamma_\la^i$ are smooth Jordan curves
that move holomorphically over $\De$. Intrinsic interior of $\bar \SSS$ is a complex 2-fold
$\SSS= \bar \SSS \sm \di \SSS$ that fibers over $\De$.  The fibers are open Riemann surfaces
$$
   S_\la= \operatorname{int} \bar S_\la= \bar S_\la \setminus \di \bar S_\la.
$$
Note that since $\De$ is contractible,
the fibration $\pi_1 : {\cal S}\ra \De $ is globally trivial in the smooth category.
% diffeomorphic, over $\De$, to the product $\Delta\times {\bar S}_0$.

\comm{****
Let $\gamma^1_{\lambda},\dots,\gamma^n_{\lambda}$ be a collection of curves that move holomorphically
in $\mathbb C^2$. We assume that for each
$\lambda$, the     $\gamma^i_{\lambda}$
are boundary curves of a compact Riemann surface $\bar S_{\lambda}\subset \C^2$,
and we let
$$
   S_\la= \operatorname{int} \bar S_\la= \bar S_\la \setminus \bigcup_i \gamma_\la^i.
$$
We also assume that surfaces $\bar S_{\lambda}$ form a fibration over $\De$ whose total space is
a complex manifold $\bar \SSS=\bigcup_\la \bar S_\la$ with boundary%
\footnote{Notice that $\bar \SSS$ is not compact}
 $$
\di \bar \SSS_\la = \bigcup_\la \bigcup_i g_\la^i
$$
in $\Delta\times \mathbb C^2.$
We let
$$
\SSS_\la= \inter \bar \SSS_\la= \bigcup_{\la\in \De} S_\la.
$$

Below we state precisely the conditions that we impose.

\begin{enumerate}
\item Let ${\cal S}\subset
\Delta\times\mathbb C^2$ is a $2$-dimensional complex manifold with a smooth boundary. Let $\pi_1:{\cal S}\to \Delta$ be the projection to the first coordinate. Let
$$S_{\lambda}:=\pi_1^{-1}(\lambda)\cap \mbox{int}{\cal S}.$$
\item The surfaces $S_\lambda$ are diffeomorphic to disks with $n$ holes. Let $\bar{S}_{\lambda}$ be the topological boundary of $S_{\lambda}$ in $\mathbb C^2$. We assume that $\bar{S}_{\lambda}=\pi_1^{-1}(\lambda)$. Moreover, we assume ${\cal S}$ is diffeomorphic to a product $\Delta\times {\bar S}_0$. %Let $\pi_2:{\cal S}\to
%bar{S}_0$ be the projection. Let $\gamma_0^1$,\dots, $\gamma_0^n$ be the %boundary components of $S_0$. Let
%$$\gamma_{\lambda}^n=\pi_2^{-1}(\gamma_0^n)\cap \bar{S}_{\lambda}.$$
\end{enumerate}

We prove the following theorem:
****}

\begin{theorem}\label{main}
 Let $f:\Delta \times \partial S_0 \to \mathbb C^2$ be a holomorphic motion of $\partial S_0$ over $\De$,
and let  $f_\lambda(z)=f(\lambda,z)$,  $\Im f_\lambda = \partial S_{\lambda}$.
Moreover, assume that the maps $f_{\lambda}: \partial S_0\ra \partial S_\la$ are diffeomorphisms.
Then there exists a holomorphic motion $\tilde f$ of $\bar{S}_0$ over $\Delta$, such that
\begin{enumerate}

\item $f=\tilde{f}|_{\partial S_0}$;
\item for any $\lambda\in \De $, $\Im \tilde{f}_\la = S_{\lambda}$.
\end{enumerate}
\end{theorem}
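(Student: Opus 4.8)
The plan is to upgrade the holomorphic motion of the boundary to a holomorphic family of conformal structures on the fixed surface $\bar S_0$, solve the resulting Beltrami equation fiberwise, and check that the solutions assemble into a holomorphic motion with the required images. Concretely: since $\pi_1:\SSS\to\De$ is a smooth locally trivial fibration over the contractible base $\De$, fix a smooth trivialization $\Theta:\De\times\bar S_0\to\bar\SSS$, with $\Theta_0=\mathrm{id}$ and $\Theta_\la:\bar S_0\to\bar S_\la$ a diffeomorphism. Each fiber $S_\la$ carries its intrinsic complex structure inherited from $\C^2$; pulling it back gives a Beltrami form $\mu_\la=\Theta_\la^*\sigma_{S_\la}\in\MM(S_0)$. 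However $\Theta$ is merely smooth, so $\la\mapsto\mu_\la$ need not be holomorphic; this is the central difficulty, addressed below. Once we have a \emph{holomorphic} path $\la\mapsto\nu_\la\in\MM(S_0)$ representing the same point of $\TT(S_0)$ as $\mu_\la$ — which exists by Slodkowski's $\la$-lemma restated (the Proposition on lifting $\gamma:\De\to\TT(S)$ to $\MM(S)$) — the Measurable Riemann Mapping Theorem produces $g_\la:S_0\to (S_0)_{\nu_\la}$ depending holomorphically on $\la$ (normalizing the solutions appropriately, using the Bers fiber space / Universal Curve construction recalled in the Background), and $(S_0)_{\nu_\la}$ is conformally the fiber $S_\la$.

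The key point is to produce the holomorphic path in Teichm\"uller space so that it is \emph{compatible with the prescribed boundary motion} $f_\la$. Here is where the boundary data enter: the holomorphic motion $f:\De\times\di S_0\to\C^2$ of the boundary curves, being a holomorphic motion, extends (after restricting to a smaller disk, or by Slodkowski globally) and in particular $f_\la$ is quasiconformal on each curve; more importantly, the family of pointed configurations $(\gamma^1_\la,\dots,\gamma^n_\la)$ as submanifolds of $\C^2$, together with the requirement that they bound $\bar S_\la$, determines a holomorphic map $\la\mapsto \tau(\la)\in\TT(\bar S_0)$: the surface $\bar S_\la$ with marking induced by $\Theta_\la$. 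That this map is holomorphic — not just smooth — follows because the complex structure of $\bar S_\la$ varies holomorphically in the ambient $\C^2$ (the inclusion $\bar\SSS\hookrightarrow\De\times\C^2$ is holomorphic and $\bar\SSS$ is a complex manifold), so for any local holomorphic chart on a piece of $\bar\SSS$ the induced Beltrami coefficient on $\bar S_0$ depends holomorphically on $\la$ away from overlaps, and the ambiguity lies in the image of $d\Phi_S$-kernel directions, i.e.\ does not affect the point in $\TT(\bar S_0)$. Thus $\tau:\De\to\TT(\bar S_0)$ is holomorphic with $\tau(0)$ the basepoint.

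Now apply the restated Slodkowski Proposition to $\tau$: lift it to a holomorphic map $\tilde\tau:\De\to\MM(\bar S_0)$, $\tilde\tau(0)=0$. Solve the Beltrami equation for $\tilde\tau(\la)$: using the Bers fiber space formalism recalled above, lift $\tilde\tau(\la)$ to a $\Gamma$-invariant Beltrami form on $\hat\C$ and take the normalized solution $f_{\widehat{\tilde\tau(\la)}}$ fixing $0,1,\infty$; this descends to quasiconformal maps $\tilde f_\la: \bar S_0\to (\bar S_0)_{\tilde\tau(\la)}$ depending holomorphically on $\la$ (holomorphic dependence of normalized solutions of the Beltrami equation on the parameter is the standard Ahlfors–Bers theorem, packaged here as holomorphy of the Bers fiber space construction). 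By definition of $\tau$, the target Riemann surface $(\bar S_0)_{\tilde\tau(\la)}$ is conformally $\bar S_\la$, and composing with that conformal identification we may regard $\tilde f_\la:\bar S_0\to\bar S_\la\subset\C^2$, so $\Im\tilde f_\la=S_\la$ on interiors and $=\di S_\la$ on boundaries, giving property (2). Holomorphic dependence on $\la$ plus injectivity of each $\tilde f_\la$ (it is quasiconformal) plus $\tilde f_0=\mathrm{id}$ give that $\tilde f$ is a holomorphic motion; property (1), i.e.\ $\tilde f|_{\di S_0}=f$, must be arranged by choosing the marking/trivialization $\Theta$ so that $\Theta_\la|_{\di S_0}=f_\la$ (possible since $f_\la$ are diffeomorphisms onto $\di S_\la$), and then noting that both $\tilde f_\la|_{\di S_0}$ and $f_\la$ induce the same boundary homotopy class; a further correction by a conformal automorphism (using the ``moreover'' uniqueness clause in MRMT together with Corollary~\ref{cor:Ker}, which pins down vector fields vanishing on $\di S$) matches them exactly. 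The main obstacle is precisely establishing holomorphy of $\la\mapsto\tau(\la)\in\TT(\bar S_0)$ from the hypothesis that $\bar\SSS$ is a complex submanifold of $\De\times\C^2$ fibering smoothly over $\De$ — equivalently, checking that the smoothly-varying pulled-back conformal structures $\mu_\la$ lie along a holomorphic disk in Teichm\"uller space; this is where the interplay between the ambient holomorphic structure on $\bar\SSS$ and the merely-smooth triviality of the fibration has to be handled carefully, and it is the heart of the argument.
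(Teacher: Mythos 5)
Your overall architecture is the same as the paper's: trivialize the fibration smoothly, regard $\la\mapsto \tau_\la=(S_\la, f_\la)$ as a path in $\TT(S_0)$, prove this path is holomorphic, lift it to $\MM(S_0)$ by the restated Slodkowski lemma, and read the motion off the Universal Curve. The well-definedness of $\tau_\la$ and the boundary-matching in your property (1) are also handled in the paper exactly as you indicate, via the observation that any two smooth extensions of the boundary motion are homotopic rel $\di S_0$ and hence define the same marked surface.

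However, the one step you explicitly defer --- holomorphy of $\la\mapsto\tau_\la$ --- is the entire content of the theorem, and the heuristic you offer for it does not work. The difficulty is not ``chart overlaps'': the trivialization $\Theta_\la$ depends only smoothly on $\la$, so the pulled-back Beltrami form $\mu_\la$ has a genuinely nonzero $\bar\la$-derivative $\kappa=\partial\mu_\la/\partial\bar\la$, and what must be shown is that $\kappa\in\mbox{Ker}\, d\Phi_{S_0}$, i.e.\ (Corollary~\ref{cor:Ker}) that $\kappa=\bar{\partial}\xi$ for a vector field $\xi$ vanishing on $\di S_0$. Ambient holomorphy of $\bar\SSS$ alone cannot give this: for a bordered surface, $\TT(S_0)$ records the boundary parametrization up to homotopy rel the ideal boundary, so a merely smooth boundary motion would destroy holomorphy of $\tau$ even though $\bar\SSS$ is complex. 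The paper's argument is a concrete computation: write the extension in the coordinates of $\C^2$ as $f_\la=(g_i+\la u_i+\bar\la v_i+o(\la,\bar\la))_{i=1,2}$; holomorphy of the boundary motion forces $v_1=v_2=0$ on $\di S_0$; expanding $\bar{\partial}f_\la=\mu_\la\,\partial f_\la$ to first order gives $\kappa\, h_i\om=\bar{\partial}v_i$, where $\partial g_i=h_i\om$ and $\om$ is a nowhere-vanishing holomorphic $1$-form on $\bar S_0$ (constructed via the Schottky double and Riemann--Roch); since $h_1,h_2$ have no common zero on $\bar S_0$, Voichick's description of the maximal ideal space of the algebra of functions holomorphic on $S_0$ and continuous on $\bar S_0$ yields $s_1h_1+s_2h_2=1$, and $\xi=(s_1v_1+s_2v_2)\eta$ with $\om(\eta)=1$ is the required field. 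Without some version of this computation (or a substitute for it), your proposal is an outline of the theorem rather than a proof of it.
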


We will show that a family $S_{\lambda}$ can be realized as a
holomorphic curve in the Universal Curve over the Teichm\"{u}ller
space ${\cal T}(S_0)$.

%This topological trivialization defines a preferred homotopy class of maps
%$h_{\lambda}:(S_0, \partial S_0)\to (S_{\lambda}, \partial S_{\lambda})$,
%so that $h_{\lambda}|_{\partial S_0}=f_{\lambda}$.

%Since maps $f_{\lambda}:\partial S_0\to \partial S_{\lambda}$ are smooth,
%they can be extended to a quasiconformal map $f_{\lambda}:S_0\to S_{\lambda}$.
%We choose a representative in the homotopy class defined by the fibration $\pi$.
%We consider a Teichm\"{u}ller space $T(S_0)$ modeled on $S_0$.
%We want to show that the map lifts to a quasisymmetric map between the %boundaries of the universal covers.
%In our case we have a preferred homotopy class of maps between our surfaces. %We can also normalize the uniformization map by saying that given three %points on the boundary go to their images under the holomorphic motion. Thus %the map lifts to the boundary maps between uniformizations. We want to show %that this map is quasisymmetric.

%\begin{lemma} Let $X$ to $Y$ be two homeomorphic surfaces. Let
%$f:I(X)\to I(Y)$ be quasysimmetric. Then it can be extended to a %quasiconformal map $f:X\to Y$, in a given homotopy class.
%\end{lemma}

%\begin{proof} Let $\phi:D\to X$ be a uniformization map.
%\end{proof}

Let us first extend the holomorphic motion $f$ to  a smooth motion of $S_{0}\ra S_\la$ over $\De$,
for which we will use the same notation $f_\la$ as for the original motion.
It defines a smooth curve $\tau_{\lambda}:=(S_{\lambda}, f_{\lambda})$ in the Teichm\"{u}ller space ${\cal T}(S_0)$.

\begin{lemma}
   The elements $\tau_{\lambda}\in {\cal T}(S_0)$ do not depend on the choice of extension.
\end{lemma}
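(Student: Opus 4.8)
The plan is to argue purely topologically, without using holomorphy of the extensions (which is not available): given two smooth extensions $f_\la$ and $g_\la$ of the boundary motion to smooth motions of the compact bordered surface $\bar S_0$ onto $\bar S_\la$, I would compare them by forming the family $\phi_\la := g_\la^{-1}\circ f_\la\colon \bar S_0\to\bar S_0$ and showing that $\phi_\la$ is isotopic to the identity relative to $\di S_0$, so that $f_\la$ and $g_\la$ are homotopic rel the ideal boundary and hence define the same point of $\TT(S_0)$.

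In detail: first note that, $\bar S_0$ being compact and the $f_\la, g_\la$ being diffeomorphisms, these maps are quasiconformal, so $(S_\la,f_\la)$ and $(S_\la,g_\la)$ represent bona fide points of $\TT(S_0)$ and it suffices to produce a conformal isomorphism $\alpha\colon S_\la\to S_\la$ — we take $\alpha=\operatorname{id}$ — with $g_\la$ homotopic to $f_\la$ rel $I(S_0)$. Since both extensions are motions over $\De$, we have $f_0=g_0=\operatorname{id}$, hence $\phi_0=\operatorname{id}$; and since $f_\la$ and $g_\la$ restrict on $\di S_0$ to the \emph{same} parametrization $f(\la,\cdot)$, we have $\phi_\la|_{\di S_0}=\operatorname{id}$ for every $\la$. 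Now fix $\la\in\De$, choose a smooth path $\gamma\colon[0,1]\to\De$ from $0$ to $\la$ (possible since $\De$ is connected), and observe that $t\mapsto\phi_{\gamma(t)}$ is a smooth isotopy of diffeomorphisms of $\bar S_0$ from $\operatorname{id}$ to $\phi_\la$, every stage of which fixes $\di S_0$ pointwise. Postcomposing with $g_\la$ yields an isotopy $t\mapsto g_\la\circ\phi_{\gamma(t)}$ from $g_\la$ to $g_\la\circ\phi_\la=f_\la$ which on $\di S_0$ equals, at every stage, the fixed boundary map $f(\la,\cdot)$. Passing to the interiors, this is a homotopy from $g_\la$ to $f_\la$ rel $I(S_0)$, so $\tau_\la=(S_\la,f_\la)$ is independent of the chosen extension.

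The one point that needs care — and the only step I expect to be nonroutine — is the translation between the honest statement ``isotopic rel $\di\bar S_0$'' for maps of the \emph{compact bordered} surface and the Teichm\"uller-theoretic statement ``homotopic rel the ideal boundary $I(S_0)$'' for the induced maps of the \emph{open} Riemann surface $S_0$. This works because the fibers are finite bordered surfaces whose boundary consists of the smooth Jordan curves $\gamma_\la^i$, so each $\gamma^i_0$ is a single ideal boundary component and a boundary-fixing isotopy of $\bar S_0$ descends to a homotopy of $S_0$ fixing $I(S_0)$; I would spell this out using a collar neighborhood of $\di\bar S_0$. It is worth remarking that the presence of the parameter $\la$ is essential: a single boundary-fixing diffeomorphism of $\bar S_0$ need not be isotopic to the identity rel boundary (e.g. a Dehn twist about a curve parallel to a boundary component), and it is precisely connectedness of $\De$ together with $\phi_0=\operatorname{id}$ that rules this out.
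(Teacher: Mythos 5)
Your proof is correct and takes essentially the same route as the paper's: both reduce the claim to showing that $\phi_\la=g_\la^{-1}\circ f_\la$, which restricts to the identity on $\di S_0$, is homotopic to the identity rel the (ideal) boundary. The paper asserts this last implication in one line, whereas you justify it by isotoping along a path in $\De$ from $0$ to $\la$ using $\phi_0=\mathrm{id}$; this extra care is warranted, since, as you observe, a boundary-parallel Dehn twist shows that a single boundary-fixing diffeomorphism need not by itself be homotopic to the identity rel the ideal boundary.
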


\begin{proof} Let $f_{\lambda}$ and $g_{\lambda}$ be two  extensions as above. Then
$$
   g_{\lambda}^{-1}\circ f_{\lambda}:\bar{S}_0\to \bar{S}_0, \quad g_{\lambda}^{-1}\circ f_{\lambda}|_{\, \di S_0} = \mathrm{Id}, \quad \la \in \De.
$$
Hence the maps $g_{\lambda}^{-1}\circ f_{\lambda}$ are homotopic to identity rel $\di S_0$, and thus
define the same element of the Teichmu\"{u}ller space ${\cal T}(S_0)$.
\end{proof}

%In what follows, it will be sometimes convenient to change the
%base point from $S_0$ to $S_{\lambda}$.

\begin{lemma}\label{lem:form}
There exists a holomorphic $1$-form $\omega$ on $S_0$ that extends smoothly to the boundary and $\omega(z)\neq 0$
for all $z\in \bar{S}_0$.
\end{lemma}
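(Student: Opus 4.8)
\emph{Proof idea.} The plan is to embed $\bar S_0$ as a compact subset of an open (non-compact) Riemann surface $S_0^+$, produce a nowhere-vanishing holomorphic $1$-form on $S_0^+$ using classical results on open Riemann surfaces, and restrict it back to $\bar S_0$.

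First I would enlarge $\bar S_0$. By hypothesis $\bar S_0$ is a compact Riemann surface with boundary whose complex structure is smooth up to each boundary curve $\gamma^i_0$ (it is a smoothly embedded complex submanifold-with-boundary of $\C^2$, since the tangent plane at any boundary point, being a limit of the complex tangent lines at nearby interior points, is itself a complex line). Hence each $\gamma^i_0$ has in $\bar S_0$ a collar neighborhood conformally equivalent to a half-open round annulus $\{\rho_i<|z|\le 1\}$; gluing an honest annulus $\{\rho_i<|z|<1+\epsilon_i\}$ along each such collar yields an open Riemann surface $S_0^+$ that contains $\bar S_0$ as a compact subset of its interior.

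Next I would use that an open Riemann surface always carries a nowhere-vanishing holomorphic $1$-form. Being non-compact, $S_0^+$ is Stein (Behnke--Stein), so every holomorphic line bundle on it is holomorphically trivial; in particular the canonical bundle $K_{S_0^+}$ admits a nowhere-vanishing holomorphic section $\omega$. (Equivalently, by the Gunning--Narasimhan theorem there is a holomorphic function $g\colon S_0^+\to\C$ with $dg$ nowhere zero, and one puts $\omega=dg$; or, by hand, one takes a non-constant meromorphic function $h$ on $S_0^+$, writes the divisor of $dh$ as the divisor of a meromorphic function $\phi$ — every divisor on a non-compact Riemann surface is principal — and sets $\omega=dh/\phi$.) The restriction of $\omega$ to $S_0$ is then a holomorphic $1$-form that extends holomorphically, in particular smoothly, across $\partial S_0$ and does not vanish at any point of $\bar S_0$; this is the form asserted by the lemma. (When $\bar S_0$ has genus $0$ it is conformally a bounded domain in $\C$ bounded by smooth Jordan curves, and one may simply take $\omega=dz$.)

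The only step with genuine content is the first one: realizing the bordered surface $\bar S_0$, together with its smooth boundary, as a compact piece of an honest open Riemann surface. Once that reduction is in place, the remaining assertions are classical facts about open Riemann surfaces together with a trivial restriction argument.
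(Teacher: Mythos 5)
Your proof is correct, but it reaches the nowhere-vanishing form by a genuinely different classical route than the paper. The paper embeds $\bar S_0$ into its Schottky double $R$, a \emph{compact} Riemann surface, and invokes Riemann--Roch to produce a meromorphic $1$-form on $R$ all of whose zeros and poles lie in $R\setminus \bar S_0$; restricting to $\bar S_0$ gives the form. You instead embed $\bar S_0$ into an \emph{open} enlargement $S_0^+$ (your collar-gluing is a hands-on substitute for simply taking an open neighborhood of $\bar S_0$ inside that same double, and the two proofs share this first, genuinely geometric step) and then quote the theory of open Riemann surfaces: Behnke--Stein makes $S_0^+$ Stein, every holomorphic line bundle on it is trivial, so the canonical bundle has a nowhere-vanishing section; alternatively Gunning--Narasimhan. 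The trade-off is clear: on a compact surface one cannot avoid zeros and poles altogether (the canonical class has degree $2g-2$), so the paper must use Riemann--Roch to push the divisor off $\bar S_0$, whereas on an open surface the divisor problem disappears entirely, at the cost of heavier machinery. Your ``by hand'' variant --- every divisor on an open Riemann surface is principal, applied to the divisor of $dh$ --- is exactly the open-surface analogue of the paper's Riemann--Roch step. The only point worth tightening is your justification that the complex structure of $\bar S_0$ is smooth up to the boundary: in this paper that is effectively part of the standing hypotheses ($\bar{\SSS}$ is a complex $2$-fold with boundary whose fibers are compact Riemann surfaces with smooth boundary curves), so you may simply cite the setup rather than argue via limits of tangent lines.
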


\begin{proof} Let $R$ be a Shottky double cover of $S_0$ \cite{AS}. There is a holomorphic embedding $\phi:S_0\to R$ such that
$\phi$ extends smoothly to the boundary $\partial S_0$. By Riemann-Roch theorem, we can take a meromorphic form $u$ on $R$ such that zeroes
and poles of $u$ belong to $R\backslash \bar{S}_0$. The form $\omega=u|_{S_0}$ is a desired holomorphic $1$-form.
\end{proof}

\begin{theorem}
The curve $\tau_{\lambda}$ is an analytic curve in ${\cal T}(S_{0})$.
\end{theorem}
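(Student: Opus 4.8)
The plan is to realize the curve $\tau$ as the composition of the holomorphic projection $\Phi_{S_0}\colon\MM(S_0)\to\TT(S_0)$ with a \emph{smooth} curve $\lambda\mapsto\mu_\lambda\in\MM(S_0)$ whose antiholomorphic derivative in $\lambda$ always lands in $\ker d\Phi_{S_0}$. Since $\Phi_{S_0}$ is holomorphic, $\tau=\Phi_{S_0}\circ\mu$ is then a smooth map $\De\to\TT(S_0)$ whose differential has vanishing conjugate-linear part at every point, hence is holomorphic; this is exactly what ``analytic curve'' means.

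\emph{Step 1: a good smooth trivialization.} Since $\pi_1\colon\bar\SSS\to\De$ is a smooth locally trivial fibration over the contractible base $\De$, and the boundary sub-fibration $\di\bar\SSS\to\De$ is smoothly trivialized by the motion $f$ (which is smooth, being holomorphic), the isotopy extension theorem yields a smooth family of diffeomorphisms $\phi_\lambda\colon\bar S_0\to\bar S_\lambda$ with $\phi_0=\mathrm{id}$ and $\phi_\lambda|_{\di S_0}=f_\lambda$. Each $\phi_\lambda$ is a $C^1$ diffeomorphism of the compact surface $\bar S_0$, hence quasiconformal, so $\mu_\lambda:=\phi_\lambda^*\sigma$ is a bounded Beltrami form on $S_0$ depending smoothly on $\lambda$, with $\|\mu_\lambda\|_\infty$ locally bounded away from $1$. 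Because $\phi_\lambda$ extends $f_\lambda$, the lemma above on independence of the extension gives $\Phi_{S_0}(\mu_\lambda)=(S_\lambda,\phi_\lambda)=\tau_\lambda$. Thus $\tau=\Phi_{S_0}\circ\mu$, and it remains only to show that $\partial_{\bar\lambda}\mu_\lambda\in\ker d\Phi_{S_0}$ at each $\lambda_0\in\De$.

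\emph{Step 2: the antiholomorphic derivative is a coboundary vanishing on $\di S_0$.} Using the change of base point $(\ref{change})$, a biholomorphism of $\MM$ and of $\TT$ commuting with the projections, we may translate the base point so that $\lambda_0=0$; there $\phi_0=\mathrm{id}$ and $\mu_0=\sigma$, i.e. $0\in\MM(S_0)$. Write $\Psi(\lambda,z)=\phi_\lambda(z)\in\bar\SSS$. Since $\pi_1\circ\Psi(\lambda,z)\equiv\lambda$ is holomorphic in $\lambda$, the vector $V(z):=\partial_{\bar\lambda}\Psi(\lambda,z)|_{\lambda=0}$ is annihilated by $d\pi_1$, hence tangent to the fiber $\bar S_0\subset\bar\SSS$; so $V$ is a smooth vector field on $\bar S_0$. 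Computing in a holomorphic fiber chart $(\lambda,\zeta)$ of $\SSS$ and using $\phi_0=\mathrm{id}$ (so the Beltrami coefficient of $\phi_\lambda$ vanishes at $\lambda=0$), one finds $\partial_{\bar\lambda}\mu_\lambda|_{0}=\bar\partial V$ as infinitesimal Beltrami forms. Moreover, on $\di S_0$ we have $\Psi(\lambda,z)=f_\lambda(z)$, which is holomorphic in $\lambda$ since $f$ is a holomorphic motion, so $V$ vanishes identically on $\di S_0$; and $\bar\partial V\in L^\infty$ because $\bar S_0$ is compact. By Corollary~\ref{cor:Ker}, applied to $S_0$ (a bounded type Riemann surface whose boundary components are smooth Jordan curves), $\partial_{\bar\lambda}\mu_\lambda|_0=\bar\partial V\in\ker d\Phi_{S_0}$. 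Hence the conjugate-linear part of $d\tau_0$ vanishes; as $\lambda_0$ was arbitrary, $\tau$ is holomorphic.

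\emph{Main obstacle.} The delicate step is the one just described: identifying $\partial_{\bar\lambda}\mu_\lambda$ with $\bar\partial$ of a genuine vector field on $\bar S_0$ vanishing on $\di S_0$, which is what makes Corollary~\ref{cor:Ker} applicable. This is precisely where the two hypotheses of the setup are used --- holomorphy of $\pi_1$ forces $\partial_{\bar\lambda}\Psi$ to be vertical (a vector field on the fiber, not merely a section of $T\bar\SSS$), and holomorphy of the boundary motion $f$ forces that vector field to vanish on $\di S_0$. The nowhere-vanishing holomorphic $1$-form $\omega$ from the previous lemma can be used to write out the chart computation of $\partial_{\bar\lambda}\mu_\lambda|_0$ in explicit flat coordinates near $\di S_0$, though it is not strictly necessary for this step.
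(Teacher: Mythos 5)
Your proposal is correct and follows the same overall skeleton as the paper's proof: extend $f$ to a smooth motion, view $\tau$ as the composition of a smooth path $\lambda\mapsto\mu_\lambda$ in $\MM(S_0)$ with the holomorphic projection $\Phi_{S_0}$, show that $\partial\mu_\lambda/\partial\bar\lambda$ is $\bar\partial$ of a continuous vector field vanishing on $\partial S_0$, and invoke Corollary~\ref{cor:Ker}. Where you genuinely diverge is in the execution of the key step. The paper works extrinsically: it expands $f_\lambda=(g_1+\lambda u_1+\bar\lambda v_1+\dots,\ g_2+\lambda u_2+\bar\lambda v_2+\dots)$ in the ambient $\C^2$, derives $\kappa\, h_i\omega=\bar\partial v_i$ using the nowhere-vanishing holomorphic $1$-form $\omega$ of Lemma~\ref{lem:form}, and then needs Voichick's corona-type theorem to produce holomorphic $s_1,s_2$ with $s_1h_1+s_2h_2=1$ so as to assemble the single vector field $\xi=(s_1v_1+s_2v_2)\eta$ with $\bar\partial\xi=\kappa$. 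You instead observe that holomorphy of $\pi_1\colon\SSS\to\De$ forces the $(1,0)$-part of $\partial_{\bar\lambda}\Psi$ to be vertical, hence already a vector field $V$ on the fiber, and compute $\partial_{\bar\lambda}\mu_\lambda|_0=\bar\partial V$ in a holomorphic fiber chart. This is a real simplification: it bypasses both Lemma~\ref{lem:form} and the Voichick reference (in effect, your $V$ coincides with the paper's $\xi$, and the corona identity is just an explicit extrinsic way of extracting the vertical component $\omega(V)=s_1v_1+s_2v_2$ from the ambient vector $(v_1,v_2)$). The one point you should tighten is the behavior at $\partial S_0$: holomorphic fiber charts for the submersion $\pi_1$ are guaranteed only on the interior $\SSS$, so the continuity of $V$ up to $\partial S_0$ and the $L^\infty$-bound on $\bar\partial V$ near the boundary need a word --- e.g.\ use the restriction of an ambient coordinate (or the form $\omega$, as you note) as a local coordinate on $\bar S_0$ near the boundary, together with smoothness of $\Psi$ on $\De\times\bar S_0$. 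With that remark supplied, your argument is complete.
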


\begin{proof}
Let us show that $\displaystyle{ \frac {\partial\tau_\la }  { \partial \overline{\lambda}} =0} $.

Fix some $\lambda_0\in \De $. Consider the map $f_{\lambda}\circ f^{-1}_{\lambda_0}:S_{\lambda_0}\to S_{\lambda}$. This map
defines a family $\mu_{\lambda}$ of Beltrami forms on $S_{\lambda_0}$:

$$\mu_{\lambda}=\frac{\overline{\partial}\left(f_{\lambda}\circ f^{-1}_{\lambda_0}\right)}{\partial \left(f_{\lambda}
\circ f^{-1}_{\lambda_0}\right)}\in {\cal M}(S_{\lambda_0})$$

%let $\Sigma(D)$ be the universal curve over $M(D)$.
%Let $\pi:\Sigma(D)\to M(D)$ be the projection.
%Let ${\cal S}_1:=\pi^{-1}(\mu_{\lambda}), \forall \lambda\in \Delta$
%\begin{lemma}${\cal S}$ is biholomorphically equivalent to $\cal S$.
%\end{lemma}

Consider the projection map
$$\Phi_{\lambda_0}: \MM (S_{\lambda_0})\to {\cal T}(S_{\lambda_0})$$

The map $(f_{\lambda_0})^*$ provides an isomorphism between ${\cal T}(S_{\lambda_0})$ and
${\cal T}(S_0)$.

Moreover,
$$
  (f_{\lambda_0})^*\circ \Phi_{\lambda_0}:{\cal M} (S_{\lambda_0})\to {\cal T}(S_0),
$$
$$
  (f_{\lambda_0})^*\circ \Phi_{\lambda_0}(\mu_{\lambda})=\tau_{\lambda}.
$$
%We consider a projection of the family to the Teichmuller
%space of a disk and then show that the projection defines an
%analytic curve in the Teichmuller space.
Then we have:
$$
   \left.\frac{\partial \tau_\la }{\partial \bar{\lambda}} \right|_{\la= \lambda_0}
= d f_{\lambda_0}^{*}\circ d\Phi_{\lambda_0}
   \left.\frac{\partial\mu_\la }{\partial \bar{\lambda}}  \right|_{\la= \lambda_0}
$$

Let us show that $\displaystyle{\frac{\partial \mu_{\lambda}}{\partial \overline{\lambda}}(\lambda_0)\in
\mbox{Ker}\ d \Phi_{\lambda_0}}$.
To simplify the notations, we assume below $\lambda_0=0$.
We construct a vector field $\xi$ on $S_0$, such that
$\displaystyle{\frac{\partial \mu_{\lambda}}{\partial \overline{\lambda}}
(0)=\overline{\partial} \xi}$, and $\xi=0$ on $\partial S_0$ and apply Corollary \ref{cor:Ker}.
Let $\displaystyle{\nu:=\frac{\partial\mu_{\lambda}}{\partial \lambda}(0)}$,
$\displaystyle{\kappa:=\frac{\partial \mu_{\lambda}}{\partial \overline{\lambda}}(0)}$. Since $\mu_0=0$,
$$\mu_{\lambda}=\lambda \nu+\bar{\lambda}\kappa+o(\lambda, \bar{\lambda}).$$
Let $(g_1, g_2):S_0\to \mathbb C^2$ be the defining functions of the Riemann surface $S_0$.
The functions $g_1$, $g_2$ extend smoothly to the boundary, and
$$f_{\lambda}=\left(\begin{array}{l}g_1+\lambda
u_1+\bar{\lambda}v_1+o(\lambda,\bar{\lambda})\\ g_2+\lambda
u_2+\bar{\lambda}v_2+o(\lambda,\bar{\lambda})\end{array}\right)$$

Since $f_{\lambda}$ is a holomorphic motion on the boundary, functions $v_1$ and $v_2$ are
equal to zero on the boundary. Let $w$ be a local coordinate on $S_0$, $\partial f=\frac{\partial f}{\partial w}dw$,
$\bar{\partial} f=\frac{\partial f}{\partial \bar{w}}d\bar{w}$.
By Lemma \ref{lem:form} there is a holomorphic non-zero $1$-form $\omega$ on $S_0$ that extends smoothly to the boundary $\partial S_0$.

The functions $g_1$ and $g_2$ are holomorphic. Thus,
$\partial g_1=h_1\omega$, $\partial g_2=h_2\omega$, where $h_1$, $h_2$ are holomorphic functions on $S_0$ that
extend smoothly to $\partial S_0$.

$$\partial f_{\lambda}=\left(\begin{array}{l}h_1\omega+\lambda\partial u_1+\bar{\lambda}\partial v_1+\dots\\
h_2\omega+\lambda \partial u_2+\bar{\lambda}\partial v_2+\dots\end{array}\right)\quad\bar{\partial} f_{\lambda}=
\left(\begin{array}{l}\lambda \bar{\partial} u_1+\bar{\lambda}\bar{\partial} v_1+\dots\\\lambda
\bar{\partial} u_2+\bar{\lambda}\bar{\partial}
v_2+\dots\end{array}\right)$$
$$\mu_{\lambda}\partial f_{\lambda}=\bar{\partial} f_{\lambda}$$
$$\left(\lambda \nu +\bar{\lambda} \kappa+\dots\right)\left(\begin{array}{l}h_1\omega+\dots\\h_2\omega+\dots\end{array}\right)
=\left(\begin{array}{l}\lambda \bar{\partial}u_1+\bar{\lambda}\bar{\partial}v_1+\dots\\
\lambda\bar{\partial} u_2+\bar{\lambda}\bar{\partial} v_2+\dots\end{array}
\right)$$

Therefore, $\kappa \left(\begin{array}{l}h_1\omega\\h_2\omega\end{array}\right)=\left(\begin{array}{l}\bar{\partial} v_1\\
\bar{\partial}v_2\end{array}\right)$. It follows from
\cite{Voichick} that the space of maximal ideals in the algebra
$A$ of holomorphic functions on $S_0$ that extend continuously to
the boundary is isomorphic to $\bar{S}_0$. The functions $h_1$,
$h_2$ do not have common zeroes on $\bar{S}_0$. So the ideal
generated by $h_1$ and $h_2$ coincide with $A$, in particular
function $1$ belong to the ideal. Hence there exists a pair of
holomorphic functions $s_1$ and $s_2$ on $S_0$ that extend
continuously to $\partial S_0$ so that $s_1h_1+s_2h_2=1$. Let
$\eta$ be a holomorphic vector field on $S_0$, such that $\omega
(\eta)=1$. Since $\omega$ extends smoothly to $\partial S_0$,
$\eta$ extends smoothly to $\partial S_0$. Set
$$\xi=(s_1v_1+s_2v_2)\eta,$$
then $\kappa=\bar{\partial}\xi$. Functions $v_1$ and $v_2$ are
smooth in $\bar{S}_0$, so $\bar{\partial} v_1$ and $\bar{\partial}
v_2$ are bounded in $L^{\infty}$-norm. They are also equal to $0$
on the boundary of $S_0$, so by Corollary~\ref{cor:Ker} $\kappa\in
\mbox{Ker}\, d\Phi_{\lambda_0}$.
\end{proof}

\begin{proof}[Proof of Theorem \ref{main}:]

By Slodkowski's $\lambda$-lemma, there exists a holomorphic family $\nu_{\lambda}$ on $S_0$, so that
$\Phi_{\lambda_0}\nu_{\lambda}=\tau_{\lambda}$. Notice that ${\cal S}$ is the preimage of the
family $\{\nu_{\lambda}|\ \lambda\in \Delta\}$ in the Universal Curve over $\MM(S_0)$.

\end{proof}

\section{Application to dynamics}

\subsection{Background on H\'{e}non maps}
Complex H\'{e}non maps are biholomorphisms $f_\la: \C^2\ra \C^2$  of the form
$$
f_{\lambda}\left(\begin{array}{c}x\\y\end{array}\right)=\left(\begin{array}{c}
x^2+c-ay \\ x
\end{array}\right),
$$
where $\lambda=(a,c)$, $a\in \mathbb C^{*}$, $c\in \mathbb C$.
%If one takes $(x,y)\in \mathbb C^2$, then a H\'{e}non map is a biholomorphism of $\mathbb C^2$.

In the one-dimensional holomorphic dynamics, %the dynamics of the map
the global phase portrait is to a large extent determined by the
behavior of the critical points.
Being diffeomorphisms,
H\'{e}non maps do not have critical points in the usual sense.
However, they possess an interesting analogous object, the {\it critical locus}.
% for H\'{e}non maps,  of the critical points.

Let us recall the following dynamically significant sets:
$$
U_{\lambda}^+=\{(x,y):\ f_{\lambda}^n(x,y)\to \infty\ \mbox{as}\
n\to +\infty \},\quad K_{\lambda}^+=\mathbb C^2\backslash
U_{\lambda}^+,\quad J_{\lambda}^+=\partial K_{\lambda}^+,
$$
$$
U_{\lambda}^-=\{(x,y):\ f_{\lambda}^{-n}(x,y)\to \infty\
\mbox{as}\ n\to +\infty \},\quad K_{\lambda}^-=\mathbb
C^2\backslash U_{\lambda}^-,\quad J_{\lambda}^-=\partial
K_{\lambda}^- ,
$$
$$J_{\lambda}=J_{\lambda}^+\cap J_{\lambda}^-.$$

Domains $U_{\lambda}^+$ and $U_{\lambda}^-$ are called {\it
(forward and backward) escape loci}; $J_{\lambda}$ is called  the
{\it Julia set} of the H\'{e}non map.

In the one-dimensional polynomial dynamics, critical points of the polynomial
are  critical points of the Green's function on the complement of the filled Julia set.
For a complex H\'{e}non map, one can define the {\it forward and backward  Green's functions}
 that measure the escape rate of the orbits  under forward and backward iterations of
the map \cite{HOVI}:
$$G_{\lambda}^{+}(x,y)=\lim_{n\to\infty}\frac{\log^+|f^{n}_{\lambda}(x,y)|}{2^n}, $$

$$G_{\lambda}^-(x,y)=\lim_{n\to \infty}\frac{\log^+|f^{-n}_{\lambda}(x,y)|}{2^n}+\log|a|.$$

Let $p_c(x)=x^2+c$. When $a\to 0$, H\'{e}non maps degenerate to a
$1$-dimensional map $x\mapsto p_c(x)$, acting on parabola $x=p_c(y)$.
When $a\to 0$, the Green's functions $G_{\lambda}^+$ converge to
$G_{(0,c)}^+(x,y)=G_{p_c}(x)$, where $G_{p_c}(x)$ is the Green's
function of the map $x\mapsto p_c(x)$. The functions
$G_{\lambda}^+$, $G_{\lambda}^-$ are pluriharmonic  on the escape
loci $U_{\lambda}^+$, $U_{\lambda}^-$ respectively. Therefore,
their level sets are foliated by Riemann surfaces. We denote by
${\cal F}_{\lambda}^+$, ${\cal F}_{\lambda}^-$ the corresponding
foliations. These Riemann surfaces are in fact copies of $\mathbb
C$ \cite{HOVI}.
% They have the following dynamical description:

There are also analogues  $\phi_{\lambda,+}$, $\phi_{\lambda,-}$
of the {\it B\"{o}ttcher coordinates}. The function
$\phi_{\lambda,+}$ is well defined and holomorphic in a
neighborhood $V_{\lambda}^+$  of $(x=\infty, y=0)$ in the
${\hat\C}^2 $-compactification of $\C^2$, and
$\phi_{\lambda,+}\sim x$ as $x\to \infty$. Moreover,  it
semiconjugates $f$ to $z\mapsto z^2$, $
\phi_{\lambda,+}(f_{\lambda})=\phi_{\lambda,+}^2$.

In $V^+_{\lambda}$, the foliation  ${\cal F}_{\lambda}^+$ consists
of the level sets of $\phi_{\lambda,+}$. It can be propagated to
the rest of $U_{\lambda}^+$ by the dynamics. One can also extend
$\phi_{\lambda,+}$ to $U_{\lambda}^+$  as a multi-valued function,
and then use any branch of it to define $\FF_{\lambda}^+$.
% In the domain where $\phi_{\lambda,+}$ is defined,
Moreover, any branch is related to the Green's function by $ G_{\lambda}^+=\log|\phi_{\lambda,+}|.$

The function $\phi_{\lambda,-}$ is defined in an analogous way.

\subsection{Critical Locus}

\begin{definition} The critical locus ${\cal C}_{\lambda}$ is the set of tangencies
between foliations ${\cal F}_{\lambda}^+$ and ${\cal F}_{\lambda}^-$.
\end{definition}

The critical locus is
%  non-empty for all H\'{e}non maps \cite{BS4}.
given by the zeroes of the 2-form
$$w=d\log\phi_{\lambda,+}\wedge d\log\phi_{\lambda,-}.$$
It is a non-empty proper analytic subset of $U_{\lambda}^+\cap U_{\lambda}^-$
which is invariant under the maps $f_{\lambda}$, $f_{\lambda}^{-1}$.

% The critical locus can be a good tool to browse in the parameter space of the H\'{e}non maps.
%The critical loci were described for H\'{e}non maps that are small perturbations of certain polynomials.
%\begin{conjecture} The critical locus is a quasiconformal invariant of
%a hyperbolic component of the H\'{e}non maps.
%\end{conjecture}

Lyubich and Robertson (\cite{LR}) gave a description of the
critical locus for H\'{e}non mappings $$(x,y)\mapsto
(p(x)-ay,x),$$ \noindent where $p(x)$ is a hyperbolic polynomial
with the connected Julia set, $a$ is sufficiently small. They showed
that for each critical point $c$ of $p$ there is a component the critical locus that is asymptotic to the line $y=c$. The rest of the components are iterates of these ones, and
each is a punctured disk. In this case,  all critical loci are obviously conformally equivalent.

A topological description of the critical locus for complex H\'{e}non maps that are perturbations
of quadratic polynomials with disconnected Julia sets is given in \cite{F}.
The critical locus is a connected Riemann surface with rich topology. It is composed of countably many Riemann spheres
$S_n$ with holes, that are connected to each other by handles. There are $2^{k-1}$ handles between $S_n$ and $S_{n+k}$. On each sphere $S_n$ the handles accumulate to two Cantor sets.

We are ready to formulate the main result of this paper:

\begin{theorem}\label{main2}
The critical loci of the H\'{e}non maps that are small
perturbations of quadratic polynomials with disconnected Julia
sets are quasiconformally equivalent.
\end{theorem}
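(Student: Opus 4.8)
The plan is to realize the family of critical loci as a holomorphic motion of (the relevant piece of) a single model surface, and then apply Theorem~\ref{main} together with the topological description of the critical locus from \cite{F}. Fix a base parameter $\lambda_0=(a_0,c_0)$ with $c_0$ outside the Mandelbrot set and $a_0\ne 0$ small, and let $\lambda$ range over a polydisk $U$ of admissible parameters; after rescaling we may regard $U$ as containing $\De$. The starting point is that the B\"ottcher-type coordinates $\phi_{\lambda,\pm}$ and the escape-rate functions $G_\lambda^\pm$, hence the foliations $\FF_\lambda^\pm$, depend holomorphically on $\lambda$ on the escape loci; consequently the $2$-form $w_\lambda=d\log\phi_{\lambda,+}\wedge d\log\phi_{\lambda,-}$ depends holomorphically on $\lambda$, so its zero set $\mathcal C_\lambda$ varies holomorphically as well. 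In particular, for a fixed level $r$ the pieces $\mathcal C_\lambda\cap\{G_\lambda^+\le r_1,\ G_\lambda^-\le r_2\}$ are compact bordered Riemann surfaces whose boundary curves move holomorphically in $\C^2$, and these pieces exhaust $\mathcal C_\lambda$ under the dynamics $f_\lambda^{\pm1}$.

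First I would set up the boundary holomorphic motion. The description in \cite{F} writes $\mathcal C_{\lambda_0}$ as a countable union of spheres $S_n$ with holes joined by handles, with $2^{k-1}$ handles between $S_n$ and $S_{n+k}$; this combinatorial pattern is stable under small perturbation because it is governed by the hyperbolic combinatorics of $p_{c_0}$ (the Cantor structure and the itinerary of the escaping critical orbit), which does not change on $U$. So there is a canonical identification of the ``cell structure'' of $\mathcal C_\lambda$ with that of $\mathcal C_{\lambda_0}$. Truncating to a large but finite subconfiguration $\bar S_0\subset\mathcal C_{\lambda_0}$ (finitely many spheres-with-holes and the handles among them, with boundary a finite union of smooth Jordan curves), the boundary curves are pieces of leaves of $\FF_{\lambda_0}^\pm$ cut out by level sets of $G_{\lambda_0}^\pm$; tracking them along $\lambda$ via the holomorphically varying foliations gives a holomorphic motion $f:\De\times\partial\bar S_0\to\C^2$ with $\Im f_\lambda=\partial(\text{the corresponding truncation of }\mathcal C_\lambda)$, and one checks the $f_\lambda$ are diffeomorphisms on the boundary. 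The hypotheses of Theorem~\ref{main} are then met (the truncated loci fit into a complex $2$-fold $\bar\SSS\subset\De\times\C^2$ fibering smoothly over $\De$), so we obtain a holomorphic motion $\tilde f$ of $\bar S_0$ with $\Im\tilde f_\lambda$ equal to the truncated $\mathcal C_\lambda$. In particular $\tilde f_\lambda$ is a quasiconformal homeomorphism of the truncated locus onto its image.

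Next I would pass from truncations to the whole critical locus. Choose an exhaustion $\bar S_0^{(1)}\subset\bar S_0^{(2)}\subset\cdots$ of $\mathcal C_{\lambda_0}$ by such finite truncations, arranged to be dynamically nested (each obtained from the previous by adjoining the $f_{\lambda_0}^{\pm1}$-images of a boundary collar). Theorem~\ref{main} applied to each gives holomorphic motions $\tilde f^{(m)}$; by the uniqueness part of the construction (the element of Teichm\"uller space is independent of the extension, Lemma in Section~3) the $\tilde f^{(m)}_\lambda$ agree on overlaps up to the conformal identification, and by equivariance with respect to $f_\lambda^{\pm1}$ they glue to a single holomorphic motion $\tilde f_\lambda:\mathcal C_{\lambda_0}\to\mathcal C_\lambda$. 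Alternatively, and more cleanly, one uses that $\mathcal C_\lambda=\bigcup_n f_\lambda^{n}(\mathcal C_\lambda^{core})$ for a fundamental piece $\mathcal C^{core}$, extends the motion on the core by Theorem~\ref{main}, and propagates by the dynamics; continuity at the accumulation Cantor sets comes from the Qc $\lambda$-lemma giving uniform quasisymmetry on the finite pieces. Evaluating at any $\lambda_1\in U$ yields a quasiconformal homeomorphism $\tilde f_{\lambda_1}:\mathcal C_{\lambda_0}\to\mathcal C_{\lambda_1}$; since any two admissible parameters can be joined within a connected set of admissible parameters (the region of $c$ outside $M$ with $|a|$ small is connected), all the $\mathcal C_\lambda$ are quasiconformally equivalent.

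The main obstacle I expect is the passage to the non-compact limit: Theorem~\ref{main} is stated for compact bordered surfaces with smooth boundary, whereas $\mathcal C_\lambda$ is an infinite surface whose handles accumulate onto Cantor sets, so one must control the motion near those Cantor sets and verify that the glued map is genuinely continuous (indeed quasiconformal) there rather than merely on each finite piece. This requires either a uniform dilatation bound across the exhaustion — which the Qc $\lambda$-lemma supplies once the truncations are chosen equivariantly and the boundary motion has uniformly bounded distortion — or a direct argument that the accumulation Cantor sets themselves move holomorphically (being the projections of the Cantor Julia sets of $p_c$, which move holomorphically since $p_c$ is hyperbolic). A secondary technical point is checking that the combinatorial model of $\mathcal C_\lambda$ really is locally constant in $\lambda$ and that the chosen boundary curves depend smoothly (not just continuously) on $\lambda$, so that the diffeomorphism hypothesis in Theorem~\ref{main} holds; this should follow from hyperbolicity of $p_c$ on $U$ and smoothness of the foliations $\FF_\lambda^\pm$ transverse to the level sets of $G_\lambda^\pm$.
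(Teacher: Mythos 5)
Your proposal follows the same overall strategy as the paper: decompose the critical locus into compact bordered pieces whose boundary curves move holomorphically, apply Theorem~\ref{main} to each piece, propagate by the dynamics $f_\lambda^{\pm 1}$, and conclude by path-connectivity of the admissible parameter set. The paper's implementation is your second (``core plus dynamics'') alternative, made concrete: the fundamental domain $\mathcal C_\lambda\cap(\Omega_\lambda\cup\Upsilon_\lambda)$ is itself of infinite type, so it is further cut into the countably many finite-type pieces $\Omega_\lambda^\alpha$ (indexed by dyadic strings, each containing a connected sum of two twice-holed disks) together with $\Upsilon_\lambda$ (a punctured disk with a hole), and the boundary motion is not obtained abstractly from the foliations but is prescribed explicitly: on $\{G_\lambda^+=2^{-n}r\}$ it preserves $\phi_{\lambda,+}^{2^n}$, and on $\{|y|=\alpha\}$, $\{|u_c|=|a|\alpha\}$ it preserves $y$ and $u_c$. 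Because adjacent pieces meet only along these curves and the motion there is prescribed once and for all, the extensions given by Theorem~\ref{main} on the separate pieces automatically match along the common boundaries, and equivariance of the boundary data under $f_\lambda$ lets the motion propagate.

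The one genuine weakness in your primary (nested exhaustion) variant is the claim that the extensions $\tilde f^{(m)}$ ``agree on overlaps'' by uniqueness. Theorem~\ref{main} produces an extension that is canonical only at the level of Teichm\"uller classes, i.e., up to homotopy rel the ideal boundary and conformal identification; two extensions over nested truncations need not coincide pointwise on the smaller one, so the maps do not literally glue. This is exactly what the paper's disjoint-piece decomposition with prescribed common boundary motion is designed to avoid, and your alternative route does avoid it; your discussion of uniform dilatation (via the Qc $\lambda$-lemma, with constant depending only on $|\lambda|$) and of the accumulating Cantor sets is consistent with what the paper leaves implicit.
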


\subsection{Topological description of the critical locus}
% First,  we recall relevant results from \cite{F}
In this section we will  give, following \cite{F},  a precise description of the critical
locus.

Let $\AAA$ be the space of one-sided sequences of $0$'s and $1$'s
(``infinite strings''),
and let $\AAA^n$ be the space of $n$-strings of $0$'s and $1$'s.

Let us describe {\it truncated spheres} that will serve as the building blocks for the
critical locus. Consider a 2-sphere $S\equiv S^2$ and a pair of disjoint
Cantor sets $\Sigma, \Theta \subset S$.
Let us  fix a nest of {\it figure-eight curves}
$\Gamma^n_\alpha$ and $L^n_\alpha$, $ n =  0,1,2,\dots$,  $\alpha\in
\AAA^{n}$,   respectively generating these Cantor sets in the
following natural way%
\footnote{For $n=0$, we let $\AAA^0=\emptyset$.}.

Let us start with a single figure-eight curve $\Gamma^0$
 bounding two domains $D^1_0$ and $D^1_1$
(with an arbitrary assignment of labeling).
The curve $\Gamma^1_0\subset D^1_0$  bounds two domains
$D^2_{00}$ and
$D^2_{01}$  compactly contained in $D^1_0$ (with an
arbitrary assignment of the second label) , and similarly,
$\Gamma^1_1\subset D^1_1$ bounds two domains $D^2_{10}$ and $D^2_{11}$
inside $D^1_1$, etc. See Figure \ref{fig:fig_eight}.

We assume that $\bigcup_\alpha D^n_\alpha \supset \Sigma$ and
 $\diam D^n_\alpha\to 0$ as $n\to \infty$ (uniformly in
$\alpha\in \AAA^{n}$), so for each sequence $\alpha\in \AAA$, there
is a unique point
$$
    \sigma_\alpha =\bigcap_{n=1}^\infty  \overline{ D^n_{\alpha_n} } \in \Sigma,
$$
where $\alpha_n\subset \AAA^n$ is the initial $n$-string of $\alpha$. That gives us a one-to-one coding
of points $\sigma\in \Sigma$ by sequences $\alpha\in \AAA$.

 Similarly, $\Theta$ is generated by a hierarchical nest of figure-eights
$L^n_\alpha$. We assume that these two nests are {\it disjoint}
in the sense that figure-eight $L^0$ lies in the unbounded
component of $\C\sm \Gamma^0$, and the other way around.

% Elements of $\Sigma$,
% $\Omega$ can be naturally encoded by one-sided infinite sequences, with entries
%$0$ and $1$.
% Denote by $\sigma_{\alpha}\in \Sigma$,
%$\omega_{\alpha}\in\Omega$ the elements encoded by a sequence
%$\alpha\in \AAA$.

The singular points $\sigma^n_\alpha$ and $\theta^n_\alpha$
of the figure-eights $\Gamma^n_\alpha$ and $L^n_\alpha$ respectively are called
their  {\it centers}.
% Let $\alpha_n$ be a $n$-string with elements $0$ and $1$.
For each figure-eight $\Gamma^n_\alpha $,
select a disk $V^n_\alpha\ni \si^n_\alpha$  whose closure  is disjoint from all
other figure-eights $\Gamma^m_\beta$ and from  $L^0$.
Then  select a disk $U^n_\alpha\ni \theta^n_\alpha$ with similar properties  for each
figure-eight $L^n_\alpha$. Moreover, make these choices so that  the closures of
all these disks are pairwise disjoint.

%$V_{\alpha_n}\subset S\backslash\left(\Sigma\cup \Omega\right)$
%with the smooth boundary.  We require that the $\bar V_{\alpha_n}$
%are disjoint and  converge to
%$\sigma_{\alpha}$, where $\alpha_n$ is the string of the first $n$
%elements of $\alpha$. We assume that sequence of the length $0$ is parametrized by $\emptyset$.

\begin{figure}[h]
\centering \psfrag{sigma}{$\Sigma$} \psfrag{Vem}{$V^{0}$}
\psfrag{V0}{$V^1_0$}\psfrag{V1}{$V^1_1$}\psfrag{G0}{$\Gamma^0$}\psfrag{G10}{$\Gamma^1_0$}\psfrag{G11}{$\Gamma_1^1$}
\includegraphics[height=3.5cm]{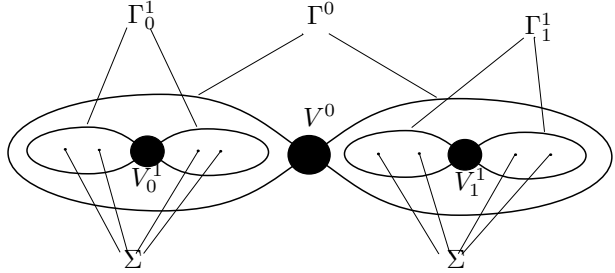}
\caption{The geometry of a truncated sphere}\label{fig:fig_eight}
\end{figure}

%Disks $U_{\alpha_n}$ play the same role for $\Omega$ as $V_{\alpha_n}$
%for $\Sigma$, and  we assume that $\bar  U{\alpha_n} \cap V_{\alpha_m} =\emptyset$.
For each $n\in \N$,
 $\alpha \in \AAA^{n}$, we choose a homeomorphism $h_{\alpha}^n$ between
the boundaries of $V_{\alpha}^n$ and $U_{\alpha}^n$.
Finally, we  mark  a point  $p\in S$ in the exterior of both figure-eights and
the disks $\bar U^0$, $\bar V^0$.
With all these choices in hand, we call
$$
      S\sm X, \quad  \mathrm{where}\  X:=
      \Sigma\cup\Theta\cup \{p\} \bigcup_n \left(\bigcup_{\alpha\in{\cal A}^n} U_{\alpha}^n
    \cup V_{\alpha}^n \right),
$$
 a {\it truncated sphere}.
 Note that for any two truncated spheres $S\sm  X$ and $S'\sm X'$ there is a homeomorphism $(S,X)\ra (S', X')$ that restricts to
the natural homeomorphisms between the corresponding marked sets.

\begin{theorem}\label{theorem_main}
Assume that the quadratic polynomial  $x\mapsto x^2+c$ has disconnected
Julia set. Then there
exists $\delta>0 $ such that for any $|a|<\delta$ the critical locus
of the H\'enon  map
$$
f_{\lambda}: \left(\begin{array}{c}x \\y \end{array}\right)\mapsto
\left(\begin{array}{c}x^2+c-ay \\ x \end{array}\right)
$$
is a non-singular Riemann surface that admits the following topological model. Take countably
many copies $S_m \sm X_m $, $m \in \mathbb Z$, of the truncated
sphere $S\sm X$,  and glue the boundary of  $V_{\alpha}^n$ of $S_k$ to the boundary of
 $U_{\alpha}^n $ of $S_{n+k+1}$ by means of the homeomorphism
$h_{\alpha}^n$.
The model map acts by translating $S_n\sm X_n$
to $S_{n+1}\sm X_{n+1}$.
\end{theorem}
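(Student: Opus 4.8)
The plan is to reduce the description of ${\cal C}_\lambda$ to a perturbative analysis near $a=0$, where the foliations ${\cal F}_\lambda^+$ and ${\cal F}_\lambda^-$ degenerate to objects governed by the one-dimensional map $p_c$, and then to reconstruct the global surface by propagating a single ``fundamental piece'' under the dynamics. Recall that ${\cal C}_\lambda$ is the zero set of the holomorphic $2$-form $w=d\log\phi_{\lambda,+}\wedge d\log\phi_{\lambda,-}$, that it is contained in $U_\lambda^+\cap U_\lambda^-$, and that it is invariant under $f_\lambda^{\pm1}$, which are conjugated to $z\mapsto z^2$ in the coordinates $\phi_{\lambda,\pm}$. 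A point $q$ lies on ${\cal C}_\lambda$ exactly when the leaf $L$ of ${\cal F}_\lambda^+$ through $q$ is tangent to ${\cal F}_\lambda^-$ at $q$, that is, when $q$ is a zero of the holomorphic $1$-form $d\log(\phi_{\lambda,-}|_L)$ on $L$. Since every leaf of ${\cal F}_\lambda^+$ is a copy of $\C$, the first step is to study ${\cal C}_\lambda$ leaf by leaf: ${\cal C}_\lambda\cap L$ is the zero set of $d\log(\phi_{\lambda,-}|_L)$.

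Next I would run the perturbative analysis. Fix a leaf $L_{x_0}$ of ${\cal F}_\lambda^+$ near the section at infinity $V_\lambda^+$, so that $\phi_{\lambda,+}|_{L_{x_0}}$ is essentially a parameter $x_0$ ranging over an annular region in the escape locus of $p_c$. On $L_{x_0}$ the function $\phi_{\lambda,-}$ is obtained by iterating $f_\lambda^{-1}(x,y)=(y,(y^2+c-x)/a)$ until the orbit escapes; for small $a$ the backward orbit of a point of $L_{x_0}$ escapes within $n$ steps unless its $y$-coordinate lies in a shrinking neighbourhood of $p_c^{-n}(x_0)$. Expanding to leading order in $a$ one finds that $\phi_{\lambda,-}|_{L_{x_0}}$ has one non-degenerate critical point near the critical point $0$ of $p_c$ and $2^{\,n}$ non-degenerate critical points clustered near the $2^{\,n}$ points of $p_c^{-n}(x_0)$ for each $n\ge1$. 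As $x_0$ varies, the clusters $p_c^{-n}(x_0)$ accumulate on the Cantor Julia set $J_{p_c}$, and the way the preimages $p_c^{-n}(x_0)$ are permuted as $x_0$ runs over loops in the escape locus is exactly the hierarchical nest of figure-eights in the statement; this yields one of the two Cantor sets on each model sphere, the other arising from the symmetric analysis along the leaves of ${\cal F}_\lambda^-$ (equivalently, after applying $f_\lambda$). In this way a neighbourhood ${\cal R}$ of one ``level'' of the critical locus is identified, as an abstract surface with its marked set, with a truncated sphere $S\sm X$.

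Then I would invoke invariance. Since $f_\lambda$ preserves ${\cal F}_\lambda^{\pm}$ and squares $\phi_{\lambda,+}$, it sends $L_{x_0}$ to the leaf one level up and raises the B\"ottcher level of every tangency by one; hence ${\cal C}_\lambda=\bigcup_{m\in\mathbb Z}f_\lambda^{\,m}({\cal R})$ and $f_\lambda$ acts on the model by translating $S_m\sm X_m$ to $S_{m+1}\sm X_{m+1}$. The handles come from the punctures: the tangency tube over the cluster near a point $\xi\in p_c^{-n}(x_0)$ is born after $n+1$ backward steps of $f_\lambda$ --- because the critical value $c=p_c(0)$ needs exactly $n+1$ backward steps of $p_c$ to reach the corresponding preimage --- so following the dynamics glues the hole $\di V^n_\alpha$ of $S_k$ to the hole $\di U^n_\alpha$ of $S_{k+n+1}$ by the prescribed homeomorphism $h^n_\alpha$, producing $2^{\,n}$ handles between $S_k$ and $S_{k+n+1}$. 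Finally, since every critical point found above is non-degenerate, $w$ vanishes to first order along ${\cal C}_\lambda$, so the critical locus is a non-singular embedded Riemann surface; it is connected because consecutive levels are joined by handles.

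I expect the main obstacle to be making this picture rigorous \emph{uniformly} as one descends into the nest. The figure-eights $\Gamma^n_\alpha$ and $L^n_\alpha$ have diameters tending to $0$ and pile up on Cantor sets, so the leading-order computations must be controlled on arbitrarily small scales; this calls for a renormalization-type argument --- rescaling near each cluster so that the local picture again resembles the initial one, using the hyperbolicity of $p_c$ on $J_{p_c}$ --- together with a normal-families input to transfer the $a=0$ model to small $a\ne0$. Checking that the combinatorics of the tangency tubes matches exactly that of the preimages $p_c^{-n}(c)$, and that no further components or tangencies occur in $U_\lambda^+\cap U_\lambda^-$, is the technical heart of the argument, and the place where the disconnectedness of $J_{p_c}$ enters essentially.
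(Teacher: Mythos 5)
First, a point of reference: this paper does not prove Theorem \ref{theorem_main} at all --- it is imported verbatim from \cite{F}, where the proof occupies most of a long article. So the comparison is really with the argument of \cite{F}. Your outline is faithful to the broad strategy there: perturb from the degenerate map at $a=0$, where everything is governed by $p_c$; locate the tangencies as zeros of $d\log\phi_{\lambda,-}$ restricted to leaves of ${\cal F}^+_\lambda$; organize them by the combinatorics of the preimages $p_c^{-n}$ (dyadic strings); and propagate a fundamental piece by the dynamics, with the shift by $n+1$ in the gluing coming from the number of iterates needed to connect the relevant levels of the Green's functions. The handle count $2^n$ between $S_k$ and $S_{k+n+1}$ is also consistent with the statement.

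However, as written the proposal is a plan rather than a proof, and the gaps are exactly where the content of the theorem lives. (i) The central analytic claim --- that $\phi_{\lambda,-}|_{L_{x_0}}$ has precisely $2^n$ non-degenerate critical points clustered near $p_c^{-n}(x_0)$ for each $n$, and \emph{no others} --- is asserted from a ``leading order in $a$'' expansion that is never controlled. Each backward step of $f_\lambda^{-1}$ involves division by $a$, so the expansion degenerates as $n$ grows, and the uniformity over all $n$ (and over the shrinking domains $D^n_\alpha$ accumulating on the Cantor set) is the renormalization-type difficulty you name but do not resolve; without it neither the existence of all the handles nor the non-singularity of ${\cal C}_\lambda$ follows. (ii) Working leaf-by-leaf does not by itself produce the global surface-with-handles picture: one must show the tangency set is a smooth curve transverse to ${\cal F}^+_\lambda$, identify which pieces connect to which, and rule out extra components in $U^+_\lambda\cap U^-_\lambda$. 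In \cite{F} this is done by cutting $\C^2$ into the explicit fundamental domains $\Omega^\alpha_\lambda$ and $\Upsilon_\lambda$ bounded by level sets of $G^+_\lambda$, $\{|y|=\alpha\}$ and $\{|u_c|=|a|\alpha\}$, and proving Rouch\'e-type statements in each piece (the Lemmas 11.4 and 13.1 quoted in Section 4.4); your sketch gestures at this but supplies no substitute. So the approach is the right one, but the proposal leaves the technical heart --- which is the theorem --- unproved.
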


\begin{figure}[h]
\centering
\includegraphics[height=11cm]{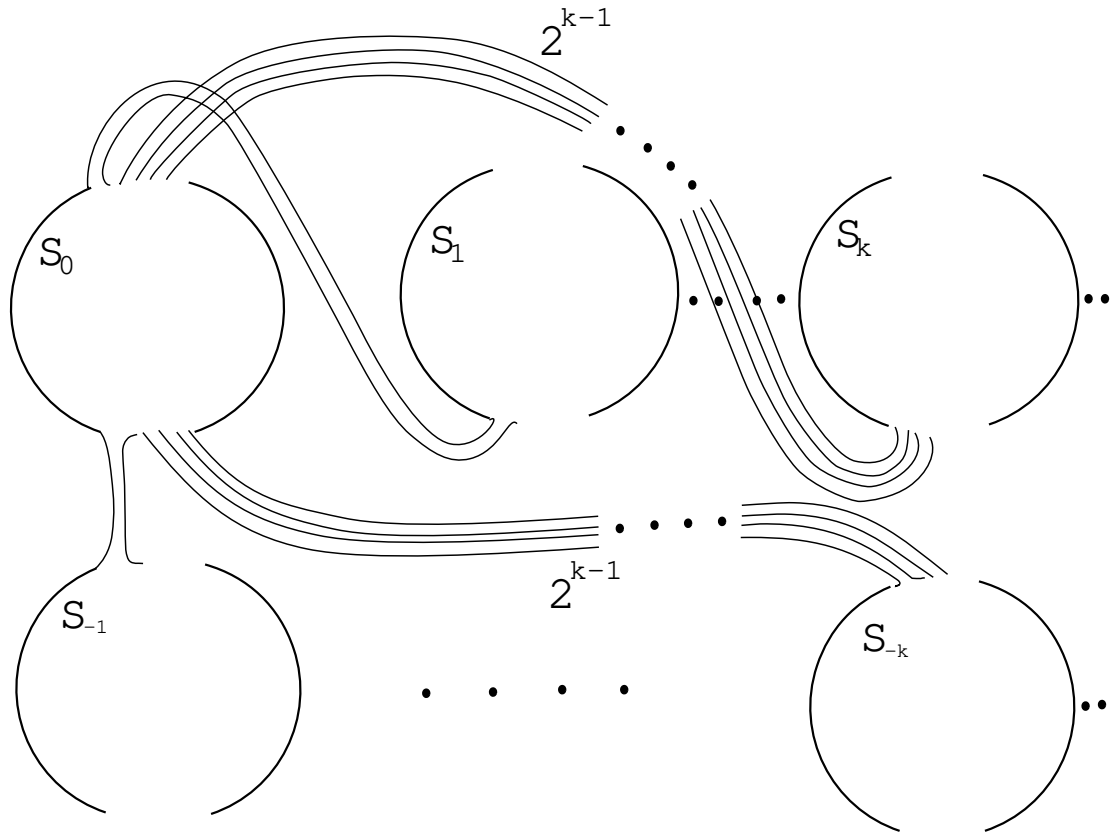}
\caption{Critical Locus.}\label{fig:top_model}
\end{figure}

\subsection{Proof of Theorem \ref{main2}}

In \cite{F} we gave a detailed description of the position of the
critical locus ${\cal C}_{\lambda}$ in $\mathbb C^2$ for
$\lambda\in \Lambda$, where $\Lambda$ is a set of parameters of a
small perturbation of quadratic polynomials with disconnected
Julia set. Below we fix a parameter $\lambda_0=(a_0,c_0)\in
\Lambda$ and use the description from \cite{F} to construct a
holomorphic motion of the critical loci $\cal C_{\lambda}$, for
$\lambda$ that belong to $1$-parameter family in a neighborhood of
$\lambda_0$.
% In order to apply Theorem \ref{main},
Let us first describe a fundamental domain of the critical locus
in $\mathbb C^2$.
\begin{figure}[h!]
\centering \psfrag{omega}{$\Omega_{\lambda}$}\psfrag{x}{$x$}\psfrag{y}{$y$}
\psfrag{1Omeg}{$\Upsilon_{\lambda}$} \psfrag{x=infty}{$x=\infty$}
\psfrag{|a|alpha}{$|a|\alpha$}
\includegraphics[height=5cm]{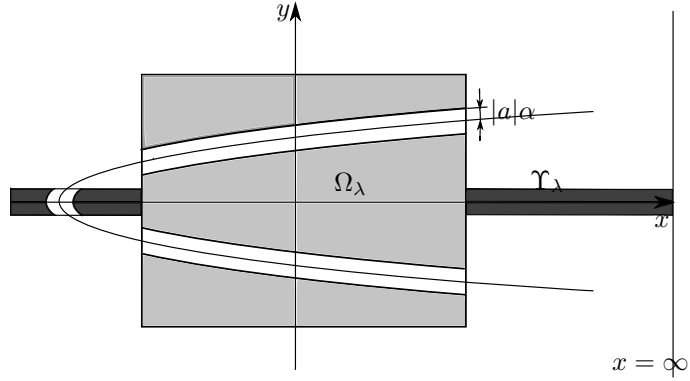}
\caption{Domains $\Omega_{\lambda}$ and $\Upsilon_{\lambda}$}
\end{figure}
Let
$$
  \Omega_{\lambda} = \{ ( x,y)\in \mathbb C^2:\quad   G_a^+\leq r, \, \quad
  |y|\leq \alpha, \, \quad |p_c(y)-x|>
  |a|\alpha\}
$$
$$
 \Upsilon_{\lambda}=\{ (x,y)\in \mathbb C^2 :\  G_a^+(x,y)\geq r , \quad
|y| \leq \epsilon\}.
$$

When $a\to 0$, domains $\Omega_{\lambda}$ converge in Hausdorff topology to $\Omega_{(0, c)}$.
In \cite{F} we choose $r$, $\alpha$ and $\epsilon$, depending on $c$, so that for $c'$ close to $c$ and $a$ small enough,
${\cal C}_{\lambda}\cap (\Omega_{\lambda}\cup \Ups_{\lambda})$ form
a fundamental domain for the map $f_{\lambda}$ on the critical locus.
We further  cut $\Omega_{\lambda}\cap U_{\lambda}^+$ into subdomains
$\Omega_{\lambda}^{\alpha}$, where $\alpha$ goes over all finite diadic strings.

We recursively encode the $n$-th preimages $\xi_{ \alpha} $
of $0$ under the map $z\mapsto z^2+c$ by diadic $n$-strings $\alpha$. We assume that $0$ itself is parametrized by $\emptyset$. Let $\alpha^0$, $\alpha^1\in \AAA^{n+1}$ be the
strings obtained by adding $0$, $1$ correspondingly to $\alpha$ on the right. We encode preimages of $\xi_{\alpha}$
by $\alpha^0$ and $\alpha^1$.
% Let $\alpha_n$ be a duadic $n$-string, then we denote by $\xi_{\alpha_n}$ the corresponding point.
% It is shown in \cite{F} that the
Since each  connected component of
$$\{\frac{r}{2^{n+1}}\leq G_{p_c}\leq \frac{r}{2^n}\}$$
 contains a unique  $n$-preimage of the critical point, they are
 encoded by diadic $n$-strings as well.
$$
\Omega_{(0,c)}^{\alpha}=\{\mbox{a connected component of }\{\frac{r}{2^{n+1}}\leq G_{(0,c)}^+\leq \frac{r}{2^n}\}\cap
\Omega_{(0,c)}\}
$$
that contains a line $x=\xi_{\alpha}$, $\alpha\in \AAA^{n}$.

By the choice of $r$ in \cite{F}, the connected components of
$$
\{\frac{r}{2^{n+1}}\leq G_{\lambda}^+\leq \frac{r}{2^n}\}\cap
\Omega_{\lambda}
$$
depend continuously on $a$ in the Hausdorff topology.
We denote by $\Omega_{\lambda}^{\alpha}$ continuation of $\Omega_{(0,c)}^{\alpha}$.

\begin{figure}[h!]
\centering \psfrag{|y|=alpha}{$|y|=\alpha$}
\psfrag{G=r}{$G_a^+=\frac{r}{2^n}$}\psfrag{G=r/2}{$G_a^+=\frac{r}{2^{n+1}}$}
\psfrag{u=aalpha}{$|u_c|=a\alpha$}\psfrag{x}{$x$}\psfrag{y}{$y$}
\includegraphics[height=7cm]{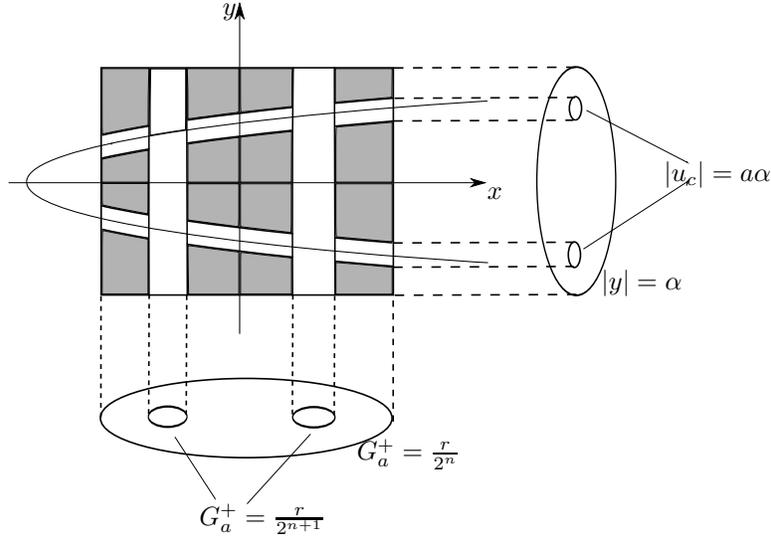}
\caption{Domain $\Omega_a^{\alpha}$}
\end{figure}
% \begin{proof}[Proof of the Theorem \ref{main2}]

Let $u_c=y^2+c-x$.

\begin{lemma}[{\cite[Lemma 11.4]{F}}] In $\Omega_{\lambda}^{\alpha}$,
where $\alpha\in \AAA^n$, $n=0,1,\dots$, the critical locus is
a connected sum of two disks $D_1$ and $D_2$ with two holes each. The
boundary of $D_1$ belongs to $\{|y|=\alpha\}$, and the holes of
$D_1$ have boundaries on $\{|u_c|=|a|\alpha\}$. The boundary of
$D_2$ belongs to $\{G_{\lambda}^+=\frac{r}{2^n}\}$ and the holes to
$\{G_{\lambda}^+=\frac{r}{2^{n+1}}\}$.
\end{lemma}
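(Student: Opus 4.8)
The plan is to carry out a singular-perturbation analysis at the degenerate parameter $a=0$, where the H\'enon map collapses to $f_{(0,c)}(x,y)=(p_c(x),x)$. I would work with the holomorphic function $P_\lambda$ defined by $w=P_\lambda\,(dx\wedge dy)$ on $\Omega^\alpha_\lambda$ (fixing a branch of each $\phi_{\lambda,\pm}$ on this simply connected set), so that $\mathcal C_\lambda\cap\Omega^\alpha_\lambda=\{P_\lambda=0\}$, and use that $P_\lambda$ depends continuously on $\lambda$ near $(0,c)$ and that $\Omega^\alpha_\lambda\to\Omega^\alpha_{(0,c)}$ in the Hausdorff topology. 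At $a=0$ one has $f^n_{(0,c)}(x,y)=\bigl(p_c^n(x),p_c^{n-1}(x)\bigr)$, so $\phi_{(0,c),+}(x,y)=\phi_{p_c}(x)$ (the B\"ottcher coordinate of $p_c$, depending on $x$ alone) and $d\log\phi_{(0,c),+}=\tfrac{\phi_{p_c}'(x)}{\phi_{p_c}(x)}\,dx$; a direct computation of the backward escape rate gives $G^-_{(0,c)}=\tfrac12\log|u_c|$, so $d\log\phi_{\lambda,-}$ converges to a nowhere-zero multiple of $du_c=-dx+2y\,dy$. Hence $P_{(0,c)}(x,y)=E(x,y)\,y\,\phi_{p_c}'(x)$ with $E$ holomorphic and non-vanishing on $\Omega^\alpha_{(0,c)}$ (it absorbs $1/\phi_{p_c}(x)$ and $1/u_c$, both finite and non-zero there since $\Omega^\alpha_{(0,c)}$ avoids the filled Julia set $K_{p_c}$ and the parabola $\{u_c=0\}$). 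Since $G_{p_c}$ has a non-degenerate critical point at $0$, hence at each iterated preimage, and $\xi_\alpha$ is the only such point in $\Omega^\alpha_{(0,c)}$, $\phi_{p_c}'$ has a simple zero along $\{x=\xi_\alpha\}$, so
\[
   \mathcal C_{(0,c)}\cap\Omega^\alpha_{(0,c)}=\bigl(\{y=0\}\cup\{x=\xi_\alpha\}\bigr)\cap\Omega^\alpha_{(0,c)},
\]
a nodal curve whose only singularity is the transverse node at $(\xi_\alpha,0)$.

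The next step is to identify the two branches. In the $x$-coordinate $\{y=0\}\cap\Omega^\alpha_{(0,c)}$ is the connected component of the dyadic annulus $\{r/2^{n+1}\le G_{p_c}\le r/2^n\}$ containing $\xi_\alpha$: its outer boundary $\{G_{p_c}=r/2^n\}$ encircles $\xi_\alpha$ and its two inner boundaries $\{G_{p_c}=r/2^{n+1}\}$ encircle the two $p_c$-preimages $\xi_{\alpha^0},\xi_{\alpha^1}$ of $\xi_\alpha$, so it is a disk with two holes; for $a\ne0$ its boundaries lie on $\{G^+_\lambda=r/2^n\}$ and $\{G^+_\lambda=r/2^{n+1}\}$, and this branch is $D_2$. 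The branch $\{x=\xi_\alpha\}\cap\Omega^\alpha_{(0,c)}$ is the disk $\{|y|\le\alpha\}$ with the two points $\{p_c(y)=\xi_\alpha\}=\{\xi_{\alpha^0},\xi_{\alpha^1}\}$ deleted; when $a\ne0$ the constraint $|u_c|>|a|\alpha$ turns these punctures into genuine holes with boundary on $\{|u_c|=|a|\alpha\}$ while the outer boundary stays on $\{|y|=\alpha\}$, and this branch is $D_1$. One checks that the remaining defining inequalities of $\Omega^\alpha_\lambda$ are inactive on each branch — here one uses the choice of $r$ in \cite{F}, which puts $G_{p_c}(0)$ strictly between $r/2$ and $r$ and keeps $c$ outside $\Omega^\alpha_{(0,c)}$ — so in particular the node $(\xi_\alpha,0)$ lies in the interior of both $D_1$ and $D_2$.

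The heart of the proof is to show that for $0<|a|\ll1$ the node opens into a smooth annular neck, so that $\mathcal C_\lambda\cap\Omega^\alpha_\lambda$ is the connected sum $D_1\#D_2$. This is genuinely a singular perturbation: $(\xi_\alpha,0)$ is a critical point of $p_c^n$ of order two, so the value $p_c^n(\xi_\alpha)=0$ is attained quadratically, the natural local parameter near $\xi_\alpha$ scales like $a^{1/2}$ rather than $a$, and the naive first-order expansion of $\log\phi_{\lambda,+}$ breaks down at $\xi_\alpha$. I would therefore rescale $x-\xi_\alpha=a^{1/2}X$, $y=Y$, compute the leading local model of $P_\lambda$, and show that in these coordinates $\{P_\lambda=0\}$ is biholomorphic to $\{XY=\mathrm{const}\}$ — equivalently, that the nodal quadratic form $y\,\phi_{p_c}'(x)$ acquires under the perturbation a non-zero additive term of size $\sim a$ — which forces the local zero set to be a smooth cylinder joining the two branches. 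Feeding this back, $\mathcal C_\lambda\cap\Omega^\alpha_\lambda$ is obtained from $D_1$ and $D_2$ by deleting a small disk around the perturbed node from each and gluing along the resulting circle, i.e.\ the asserted connected sum; Hausdorff continuity of $\Omega^\alpha_\lambda$ together with properness of $\mathcal C_\lambda$ in $U^+_\lambda\cap U^-_\lambda$ then rules out any further components. The main obstacle will be exactly this last step: pinning down the correct rescaled local model at the preimage $\xi_\alpha$ of the critical point, and verifying the non-degeneracy that makes the node disappear rather than persist.
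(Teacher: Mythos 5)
This lemma is not proved in the present paper at all: it is imported verbatim from \cite{F} (Lemma 11.4 there), so there is no in-paper argument to compare yours against. Your blind reconstruction does follow the strategy that \cite{F} actually uses: degenerate to $a=0$, where $f$ collapses to $(x,y)\mapsto(p_c(x),x)$, identify the limiting zero set of $w=d\log\phi_{\lambda,+}\wedge d\log\phi_{\lambda,-}$ as the nodal curve $\{y=0\}\cup\{\phi_{p_c}'(x)=0\}$, match the two branches with $D_2$ (coordinate $x$, bounded by the equipotentials $G^+=r/2^n$ and $G^+=r/2^{n+1}$) and $D_1$ (coordinate $y$ on $\{x=\xi_\alpha\}$, bounded by $\{|y|=\alpha\}$ and $\{|u_c|=|a|\alpha\}$), and then show the node at $(\xi_\alpha,0)$ opens into an annular neck for $0<|a|\ll 1$. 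Your identification of the branches, of the location of their boundaries, and of the node as an interior point of both (using $r/2<G_{p_c}(0)<r$ and $\xi_\alpha\neq c$) is correct.

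The genuine gap is the one you flag yourself, and it is not a technicality but the entire content of the lemma: you must compute the first-order term in $a$ of $\log\phi_{\lambda,+}$ (equivalently, of the tangent field to $\mathcal F^+_\lambda$) at the $n$-th preimage $\xi_\alpha$ of the critical point and verify that the resulting constant term added to the quadratic form $y\,(x-\xi_\alpha)$ is genuinely of order $a$ and non-zero; without that, the local model could just as well remain a node (persisting singularity) or degenerate otherwise, and the ``connected sum'' conclusion fails. This forces you to control $\phi_{\lambda,+}$ through $n+1$ iterates of the map near a critical preimage, where the naive $O(a)$ expansion is exactly what breaks down --- this is the bulk of the work in \cite{F} and is not something Hausdorff continuity of $\Omega^\alpha_\lambda$ can substitute for. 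Two smaller points: at $a=0$ the map is not invertible, so ``$\mathcal C_{(0,c)}$'' must be defined as a limit of the loci $\{P_\lambda=0\}$ (your formula for $P_{(0,c)}$ does make sense, but say so); and ruling out extra components of $\{P_\lambda=0\}$ in $\Omega^\alpha_\lambda$ needs an actual degree or Rouch\'e-type argument on the divisor level, since properness alone does not exclude small components born near the boundary.
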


A holomorphic motion near the boundaries
$\{G_{\lambda}^+= 2^{-n}  r \}$ and $\{G_{\lambda}^+~=~2^{-(n+1)}~r~\}$ is defined so
that it preserves the values of
the  functions $\phi_{\lambda,+}^{2^n}$ and
$\phi_{\lambda,+}^{2^{n+1}}$ respectively. Similarly, the holomorphic
motion of the boundaries $\{|y|~=~\alpha\}$
and $\{|u_c|=|a|\alpha\}$ preserves the values of $y$ and $u_c$.

We apply Theorem \ref{main} to the piece of the critical locus inside $\Omega_{\lambda}^{\alpha}$ and extend the holomorphic
motion to the interior.

\begin{lemma}[{\cite[Lemma 13.1]{F}}] There exists $\delta$ such that
$\forall |a|<\delta$ the critical locus ${\cal C}_{\lambda}$ in
$\Ups_{\lambda}$ is a punctured disk, with a hole removed. The
puncture is at the point $(\infty, 0)$, the boundary of the hole
belongs to $\{|p_c(y)-x|=|a|\alpha\}$.
\end{lemma}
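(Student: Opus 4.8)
The plan is to describe the two foliations $\FF_\la^{+}$ and $\FF_\la^{-}$ explicitly inside $\Ups_\la$, reduce the tangency condition to a single holomorphic equation, and read off the topology by the implicit function theorem. Write $\la=(a,c)$ with $|a|$ small, and $u_c=p_c(y)-x=y^{2}+c-x$. First I would note that $\Ups_\la$ lies near the point $(x=\infty,y=0)$: the condition $G_a^{+}\ge r$ together with $|y|\le\epsilon$ forces $|x|$ to be large. On $\Ups_\la$ the forward B\"ottcher coordinate $\phi_{\la,+}$ is holomorphic with $\phi_{\la,+}\sim x$, hence a submersion, and the leaves of $\FF_\la^{+}$ (its level sets) are graphs over the $y$-variable converging, as $a\to0$, to the vertical lines $\{x=\mathrm{const}\}$.

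For the backward foliation I would use one backward iterate. Since $f_\la^{-1}(x,y)=(y,\,u_c/a)$ and $|u_c|\gtrsim|x|$ is bounded below on $\Ups_\la\cap\{|u_c|\ge|a|\alpha\}$, the second coordinate of $f_\la^{-1}(x,y)$ is enormous, so this set lies in $U_\la^{-}$ and $f_\la^{-1}$ carries it into the region where $\phi_{\la,-}$ has its B\"ottcher normal form near $\{y=\infty\}$. Using $\phi_{\la,-}^{2}=\phi_{\la,-}\circ f_\la^{-1}$ and expanding that normal form yields
\[
  \phi_{\la,-}(x,y)^{2}=\frac{u_c}{a}\,\bigl(1+O(a)\bigr)
\]
uniformly on $\Ups_\la\cap\{|u_c|\ge|a|\alpha\}$; in particular $d\log\phi_{\la,-}$ is well defined there and the leaves of $\FF_\la^{-}$ converge, as $a\to0$, to the level sets of $u_c$.

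Next, taking $(\phi_{\la,+},y)$ as holomorphic coordinates, $w=d\log\phi_{\la,+}\wedge d\log\phi_{\la,-}=\phi_{\la,+}^{-1}\bigl(\partial_y\log\phi_{\la,-}\bigr)\,d\phi_{\la,+}\wedge dy$, so ${\cal C}_\la\cap\Ups_\la=\{\,\partial_y\log\phi_{\la,-}|_{\phi_{\la,+}}=0\,\}$. Substituting the expansion above and dividing out the nonvanishing factors $u_c$ and $1+O(a)$, the defining equation reduces to $\partial_y u_c|_{\phi_{\la,+}}=0$, i.e. $2y+O(a/x)=0$ (the correction coming from $\partial_y x|_{\phi_{\la,+}}=-\partial_y\phi_{\la,+}/\partial_x\phi_{\la,+}=O(a/x)$, read off from $2\phi_{\la,+}\partial_y\phi_{\la,+}=-a\,\partial_x\phi_{\la,+}\!\circ f_\la$). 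Its $y$-derivative is $\approx 2/u_c\ne0$ on the finite part of $\Ups_\la$, so for $|a|<\de$ small the holomorphic implicit function theorem produces a nonsingular graph ${\cal C}_\la\cap\Ups_\la=\{(x,\psi_\la(x)):x\in D_\la\}$ with $\psi_\la\to0$ as $a\to0$; this also shows that ${\cal C}_\la$ is non-singular in $\Ups_\la$.

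It then remains to identify the base $D_\la\subset\C$. Since $\psi_\la\to0$, $D_\la$ is a small perturbation of $\bigl(\{G_{p_c}\ge r\}\cap\C\bigr)\setminus\{|x-c|<|a|\alpha\}$. By the choice of $r$ in \cite{F} (taken with $G_{p_c}(0)\in[r/2,r]$, so that the level set $\{G_{p_c}=r\}$ is a single Jordan curve), $\{G_{p_c}\le r\}$ is a topological disk, hence $\{G_{p_c}\ge r\}\cap\C$ is a punctured disk. The constraint $|u_c|\ge|a|\alpha$ deletes a small disk around $x=c$, since $u_c\approx c-x$ on the graph; that disk lies strictly inside $\{G_{p_c}\ge r\}$ because $G_{p_c}(c)=2G_{p_c}(0)\ge r$. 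So $D_\la$, and therefore ${\cal C}_\la\cap\Ups_\la$, is a disk with one interior point removed --- the puncture at $(\infty,0)$, which is not in $U_\la^{+}\cap U_\la^{-}$ --- and one further disk removed, whose boundary lies on $\{|p_c(y)-x|=|a|\alpha\}$; transversality of ${\cal C}_\la$ to $\{G_\la^{+}=r\}$ and to $\{|u_c|=|a|\alpha\}$ (generic $r,\alpha$) makes the two boundary curves smooth Jordan curves, which is exactly the assertion. The hardest step is the uniform estimate $\phi_{\la,-}^{2}=(u_c/a)(1+O(a))$ on $\Ups_\la\cap\{|u_c|\ge|a|\alpha\}$: because $f_\la^{-1}$ degenerates as $a\to0$ (its second coordinate blows up like $1/a$), one must control the B\"ottcher normal form near $\{y=\infty\}$ out to $|y'|\sim|u_c|/|a|$, uniformly up to the puncture $(\infty,0)$ where $|u_c|\to\infty$ as well --- a uniform estimate over a noncompact neighborhood shrinking onto the puncture. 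A secondary point is the bookkeeping matching the two boundary curves with the adjacent pieces of ${\cal C}_\la$ (the $\Om^{\emptyset}_\la$-piece of \cite[Lemma 11.4]{F} along $\{|u_c|=|a|\alpha\}$), which is what pins down the particular choices of $r,\alpha,\epsilon$.
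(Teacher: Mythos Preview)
The present paper does not give its own proof of this lemma: it is quoted from \cite[Lemma~13.1]{F} and used as a black box in assembling the holomorphic motion of the fundamental domain. There is therefore nothing in this paper to compare your argument against.

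Your outline is nonetheless a plausible reconstruction of how such a statement is proved: describe $\FF_\la^{\pm}$ in $\Ups_\la$ via the B\"ottcher coordinates, reduce the tangency condition to $\partial_y\log\phi_{\la,-}\,|_{\phi_{\la,+}=\mathrm{const}}=0$, expand using $\phi_{\la,-}^2=(u_c/a)(1+O(a))$, and solve by the implicit function theorem to obtain a graph $y=\psi_\la(x)$ with $\psi_\la\to 0$. Two small points. First, the $y$-derivative of your reduced equation $2y+O(a/x)=0$ is $2+o(1)$, not $2/u_c$; presumably you meant the second $y$-derivative of $\log\phi_{\la,-}$ before clearing denominators. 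Second, the domain $\Ups_\la$ as written here carries no constraint $|u_c|\ge|a|\alpha$, so the appearance of a hole with boundary on $\{|u_c|=|a|\alpha\}$ is not forced by the domain alone; one needs either an argument that $\mathcal C_\la\cap\Ups_\la$ does not enter $\{|u_c|<|a|\alpha\}$ (your asymptotics break down there, so this is not automatic), or the tacit convention from \cite{F} that $\Ups_\la$ is cut along that hypersurface to match the gluing with $\Om_\la$. You flag the uniform B\"ottcher estimate near the puncture as the hard step, and that is indeed where the substance lies.
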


We apply Theorem \ref{main} to ${\cal C}_{\lambda}\cap \Ups_{\lambda}$ and extend the holomorphic motion to the interior.
We propogate the holomorphic to the rest of ${\cal C}_{\lambda}$ by dynamics. The space $\Lambda$ is path connected. Therefore, the critical loci ${\cal C}_{\lambda}$ for all maps that are small perturbations of quadratic polynomials with disconnected Julia set are quasiconformally equivalent.

% \end{proof}

\begin{bibdiv}
\begin{biblist}
\bib{AS}{book}{
author={Ahlfors, L.},
author={Sario, L.},
title={Riemann surfaces},
date={1960},
publisher={Princeton University Press}
}

\bib{BB}{article}{
author={Bassanelli, G.},
author={Berteloot, F.},
title={Lyapunov Exponents, bifurcation currents and laminations in bifurcation locus},
journal={Math.Ann.},
volume={345},
date={2009},
pages={1-23}
}
\bib{BS4}{article}{
title={Polynomial
diffeomorphism of $\mathbb C^2$. V. Critical points and Lyapunov
exponents},
author={Bedford, E.},
author={Smillie, J.},
journal={J. Geom. Anal.},
volume={8(3)},
pages={349-383},
date={1998}}

\bib{BS5}{article}{
title={Polynomial diffeomorphisms of $\mathbb C^2.$ VI. Connectivity of J},
author={Bedford, E.},
author={Smillie, J.},
journal={Ann. of Math.},
volume={148(2)},
pages={695-735},
date={1998}}

\bib{BR}{article}{
author={Bers, L.},
author={Royden, H.},
title={Holomorphic families of injections},
journal={Acta Math.},
volume={157},
date={1986},
pages={259-286}
}

\bib{F}{article}{
author={Firsova, T.}, title={Critical locus for complex H\'{e}non
maps}, journal={Indiana Univ. Math. J.},
volume={61},date={2012},pages={1603-1641}}

\bib{DL}{article}{
author={Dujardin, R.},
author={Lyubich, M.},
title={Stability and bifurcations for dissipative polynomial automorphisms of $\C^2$},
journal={Preprint IMS at Stony Brook},
volume={13-01},
date={2013},
}

\bib{HOVI}{article}{
title={H\'{e}non mappings
in the complex domain. I. The global topology of dynamical space},
author={Hubbard, J. H.},
author={Oberste-Vorth, R. W.},
journal={Inst. Hautes \'{E}tudes Sci. Publ. Math.},
volume={79},
pages={5-46},
date={1994}}

\bib{L}{article}{
title={Some typical properties of the dynamics of rational maps},
author={Lyubich, M.},
journal= {Russian Math. Surveys},
date={1983},
volume={38},
pages={154-155}
}

\bib{LR}{article}{
title={The critical
locus and rigidity of foliations of complex H\'{e}non maps},
author={Lyubich, M.},
author={Robertson, J.},
journal={Unpublished manuscript},
date={2004}}

\bib{Teich}{book}{
author={Hubbard, J. H.},
title={Teichm\"{u}ller Theory and
Applications to Geometry, Topology and Dynamics},
volume={1},
date={2006},
publisher={Matrix Editions} }

\bib{MSS}{article}{
author={Ma\~{n}\'{e}, R.},
author={Sad, P.},
author={Sullivan, D.},
title={On the dynamics of rational maps},
journal={Ann. Sci. \'{E}cole Norm. Sup.},
volume={16},
date={1983},
pages={193-217}
}

%\bib{Rudin}{article}{
%author={Rudin, W.},
%title={The closed ideals in an algebra of analytic functions.},
%journal={Canad. J. Math.},
%volume={9},
%date={1957},
%pages={426-434}}

\bib{Slodkowski}{article}{
author={Slodkowski, Z.},
title={Holomorphic motions and polynomial hulls},
journal={Proc. Amer. Math Society},
volume={111},
date={1991},
pages={347-355}}

\bib{ST}{article}{
author={Sullivan, D.},
author={Thurston, W.},
title={Extending holomorphic motions},
journal={Acta Math},
volume={157},
date={1986},
pages={243-257}
}

\bib{Voichick}{article}{
author={Voichick, M.},
title={Ideals and invariant subspaces of analytic functions},
journal={Transactions of Am. Math. Soc.},
volume={111},
date={1964},
pages={493-512}
}

\end{biblist}
\end{bibdiv}
\end{document}